\newcommand{\N}{\mathbb{N}}
\newcommand{\R}{\mathbb{R}}
\newcommand{\C}{\mathbb{C}}
\newcommand{\Z}{\mathbb{Z}}
\newcommand{\Q}{\mathbb{Q}}
\renewcommand{\S}{\mathcal{S}}
\newcommand{\ip}[2]{\langle#1,#2\rangle}
\newtheorem{thm}{Theorem}
\newtheorem{lemma}{Lemma}
\newtheorem{prop}{Proposition}
\newtheorem{cor}{Corollary}
\theoremstyle{remark}
\newtheorem{rem}{Remark}
\theoremstyle{definition}
\newtheorem{deft}{Definition}
\newtheorem{example}{Example}
\theoremstyle{conjecture}
\newtheorem{conjecture}{Conjecture}
\theoremstyle{claim}
\begin{document}

\title{Extension and restriction principles for the HRT conjecture}

\author{Kasso A.~Okoudjou}

\address{Kasso A.~Okoudjou\\
Department of Mathematics $\&$ Norbert Wiener Center\\
University of Maryland\\
College Park, MD, 20742 USA}

\email{kasso@math.umd.edu}
\thanks{This work  was partially supported by a grant from the Simons Foundation $\# 319197$, and ARO grant W911NF1610008. Part of this material is based upon work supported by the National Science Foundation under Grant No.~DMS-1440140 while the author was in residence at the Mathematical Sciences Research Institute in Berkeley, California, during the Spring 2017 semester.}

\subjclass[2000]{Primary 42C15; Secondary 42C40}

\date{\today}

\keywords{HRT conjecture, positive definite matrix, Bochner's theorem, short-time Fourier transform, time-frequency analysis}

\begin{abstract}The HRT (Heil-Ramanathan-Topiwala)  conjecture asks whether a finite collection of time-frequency shifts of a  non-zero square integrable function on $\R$  is linearly independent. This longstanding conjecture remains largely open even in the case when the function is assumed to be smooth. Nonetheless, the conjecture has been proved for some special families of functions and/or special sets of points. The main contribution of this paper is an inductive approach to investigate the HRT conjecture  based on the following. Suppose that the HRT is true for a given set of $N$ points and a given function. We identify  the set of all  new points such that the conjecture remains true for the same function and the set of $N+1$ points obtained  by adding one of these new points to the original set. To achieve this we introduce a real-valued function whose global maximizers describe when the HRT is true. To motivate this new approach we re-derive a special case of the HRT for sets of $3$ points. Subsequently, we establish   new results for points in $(1,n)$ configurations, and for a family of symmetric $(2,3)$ configurations. Furthermore, we use these results and the refinements of other known ones  to prove that the HRT holds for certain families of $4$ points. 
\end{abstract}

\maketitle \pagestyle{myheadings} \thispagestyle{plain}
\markboth{K. A. OKOUDJOU}{EXTENSION PRINCIPLE FOR THE HRT}

\section{Introduction}\label{sec1}

 For $a, b \in \R$  and a function $g$ defined on $\R$, let $M_bf(x)=e^{2\pi i bx}f(x)$ and $T_af(x)=f(x-a)$  be  respectively  the modulation operator, and the translation operator.   Given a function $g\in L^{2}(\R)$
 and $\Lambda=\{(a_k, b_k)\}_{k=1}^{N} \subset \R^2$, we define $$\mathcal{G}(g, \Lambda)=\{e^{2\pi i b_k \cdot}g(\cdot - a_k)\}_{k=1}^{N}.$$ $\mathcal{G}(g, \Lambda)$ is called a (finite) Weyl-Heisenberg or Gabor system \cite{Groc2001}. The HRT conjecture \cite{Heil06, HRT96}, states that 

\begin{conjecture}\label{hrt}
Given any $0\neq g \in L^{2}(\R)$  and $\Lambda=\{(a_k, b_k)\}_{k=1}^{N} \subset \R^2$, $\mathcal{G}(g, \Lambda)$ is a linearly independent set in $L^2(\R)$.
\end{conjecture}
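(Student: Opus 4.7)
The plan is to approach Conjecture \ref{hrt} inductively on $N=|\Lambda|$. The base case $N=1$ is trivial since $g\ne 0$, and $N=2$ reduces via the metaplectic and Heisenberg symmetries of the conjecture to verifying independence of $g$ and $T_a g$ for some $a\ne 0$. A direct proof for arbitrary $N$ appears out of reach, so the strategy is to isolate conditions on $(g,\Lambda)$ under which independence can be established, and then to enlarge the family of admissible configurations one point at a time.

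The inductive engine announced in the abstract is an extension principle: assuming $\mathcal{G}(g,\Lambda)$ is linearly independent with $|\Lambda|=N$, one asks which new points $\lambda=(a,b)\in\R^2$ keep $\mathcal{G}(g,\Lambda\cup\{\lambda\})$ linearly independent. The natural device is the Gram matrix with entries
\[
G(\lambda)_{jk}=\ip{M_{b_j}T_{a_j}g}{M_{b_k}T_{a_k}g},
\]
whose entries are, up to phase factors, values of the autocorrelation of $g$ in the time-frequency plane. Linear independence is equivalent to invertibility of $G(\lambda)$, and by Bochner's theorem one can encode its positive definiteness in a real-valued function of $\lambda$ whose global maxima single out exactly the admissible new points. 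I would use this function to produce an explicit description of the forbidden locus, and then iterate starting from the known three-point case to propagate the conclusion to larger configurations.

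To turn the scheme into concrete theorems, I would specialize to the configurations highlighted in the abstract. For $(1,n)$ configurations, namely one point together with $n$ collinear points, the Heisenberg and metaplectic symmetries normalize the collinear set and make the extra row and column of $G(\lambda)$ explicit; one then argues that their contribution cannot destroy positive definiteness. For the symmetric $(2,3)$ configurations, the symmetry group of the point set acts on $G(\lambda)$ and block-diagonalizes it, reducing positive definiteness to a lower-dimensional spectral problem. The main obstacle, and the reason the full HRT conjecture remains open, is the inductive step in the absence of any structural hypothesis on $g$: the inductive hypothesis provides no quantitative control on the new autocorrelation entries beyond what Bochner already supplies, and closing this gap in full generality is precisely the long-standing difficulty that forces the paper to settle for extension theorems rather than an unconditional proof.
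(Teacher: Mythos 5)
You have not produced a proof of Conjecture~\ref{hrt}, and indeed you could not have: the statement you were asked to prove is the HRT conjecture itself, which the paper explicitly leaves open (it proves only special cases). Your text is a research program, not an argument, and you concede as much in your final sentence. Judged as a proof it has an irreparable gap: the extension principle merely translates ``$\mathcal{G}(g,\Lambda\cup\{\lambda\})$ is independent'' into ``$\lambda$ is not a global maximizer of the function $F(a,b)=\ip{G_N^{-1}u(a,b)}{u(a,b)}$,'' and nothing in your outline (or in the paper) shows that the maximizers of $F$ are confined to $\Lambda$ for a general $g\in L^2$. That is exactly where the difficulty lives, so the induction never closes.

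As a description of the paper's program your sketch is broadly faithful but misattributes several mechanisms. The real-valued function controlling admissible new points comes from the Schur-complement criterion for positive definiteness of a bordered Hermitian matrix (Theorem~\ref{psd-matrix} and Corollary~\ref{version-psd}), not from Bochner's theorem; Bochner enters only in the separate proof for collinear points, where the Gramian entries are samples of $\widehat{|\hat g|^2}$. For $(1,n)$ configurations the paper does not argue that ``the extra row and column cannot destroy positive definiteness''; it uses a dimension-counting argument to show at most one equivalence class of bad configurations can exist, and then, for real-valued $g$ and $n=3$, a conjugation trick that converts a putative dependence into a dependence over an equispaced $(1,3)$ configuration, which is already known. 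For the symmetric $(2,3)$ configurations the Gramian is not block-diagonalized by symmetry; instead the paper works directly with the linear dependence relation, rewrites it as a recursion $|P(x)g(x)|=|Q(x)g(x-1)|$, and derives a contradiction with $g\in L^2$ via Birkhoff's ergodic theorem, following Demeter. If you want to salvage your write-up, recast it as a survey of the paper's extension and restriction principles and state clearly that the conjecture itself remains unproved.
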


To date a definitive answer to the Conjecture has not been given even  when one assumes that the function $g$ is very smooth and decays fast, e.g., when $g \in S(\R)$, the space of Schwartz functions on $\R$. In particular, the following (sub-conjecture) is also open

\begin{conjecture}\label{hrt-s}
Given any $ g \in S(\R), g\neq 0$  and $\Lambda=\{(a_k, b_k)\}_{k=1}^{N} \subset \R^2$, $\mathcal{G}(g, \Lambda)$ is a linearly independent set in $L^2(\R)$.
\end{conjecture}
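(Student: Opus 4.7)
The plan is to approach Conjecture~\ref{hrt-s} inductively on the cardinality $N$ of $\Lambda$. The base cases $N\le 3$ are classical (Linnell, Heil--Ramanathan--Topiwala), so the real work lies in the inductive step: assuming $\mathcal{G}(g,\Lambda)$ is linearly independent for some $g\in S(\R)$ and some $N$-point configuration $\Lambda$, identify the set of new points $(a,b)\in\R^2\setminus\Lambda$ such that $\mathcal{G}(g,\Lambda\cup\{(a,b)\})$ remains linearly independent. If this admissible set can be shown to be all of $\R^2\setminus\Lambda$, then induction closes the conjecture.

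The first step is to normalize. Because the metaplectic group acts covariantly on time--frequency shifts, preserves $S(\R)$, and preserves linear independence of Gabor systems, one may apply an appropriate symplectic rotation, shear, or dilation to bring $\Lambda$ into a convenient canonical form, e.g.\ placing one point at the origin and another on a coordinate axis. Next, following the strategy announced in the abstract, I would attach to the pair $(g,\Lambda)$ a real-valued obstruction function on $\R^2$ whose value at $(a,b)$ measures failure of linear independence of $\mathcal{G}(g,\Lambda\cup\{(a,b)\})$; the admissible extension points are exactly those $(a,b)$ that are not global maximizers.

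The technical engine is Bochner's theorem applied to the short-time Fourier transform and to associated Gram matrices. A putative dependence $\sum_k c_k M_{b_k}T_{a_k}g=0$ forces a singular Gram matrix whose entries are values of the ambiguity function $V_g g$. For Schwartz $g$, $V_gg\in S(\R^2)$, so its Fourier transform is Schwartz with a well-controlled sign structure; the aim is to show that no choice of $N+1$ points can make the associated Gram matrix singular except on a meager set, which the metaplectic normalization excludes from consideration.

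The hardest point will be closing the induction uniformly in $\Lambda$: even when the extension principle succeeds on an open dense set of candidate $(a,b)$, transferring the conclusion to \emph{every} admissible $(a,b)$ is delicate, because linear independence is neither an open nor a closed condition as the points vary. Concrete configurations such as $(1,n)$ and symmetric $(2,3)$ arrangements already exhibit subtle geometric obstructions, and I expect that fully handling $N=4$ will require combining the extension principle with new symmetry or Zak-transform arguments. The jump to arbitrary $N$ almost certainly needs a genuinely new ingredient beyond the inductive framework itself — this is precisely where the long-standing difficulty of the conjecture concentrates.
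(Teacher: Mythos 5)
The statement you are asked to prove is Conjecture~\ref{hrt-s}, which the paper explicitly leaves \emph{open}: no proof of it appears anywhere in the text, and the paper's actual contributions are partial results (collinear points, $(1,n)$ configurations, symmetric $(2,3)$ configurations, certain families of four points). Your proposal is therefore not a proof but a research program --- essentially a paraphrase of the paper's own extension-principle framework (the obstruction function $F(a,b)=\ip{G_N^{-1}u_N(a,b)}{u_N(a,b)}$ built from the Gramian, metaplectic normalization, and the characterization of admissible new points as non-maximizers of $F$). You acknowledge this yourself in the final sentence, where you concede that closing the induction for arbitrary $N$ ``almost certainly needs a genuinely new ingredient.''

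The concrete gaps are these. First, the inductive step is never carried out: Theorem~\ref{keyfunest} and Corollary~\ref{Fandhrt} show only that $F\le 1$ with equality exactly at the bad points and that $F\to 0$ at infinity, so the obstruction is confined to a compact set --- but nothing in the paper or in your sketch rules out a global maximizer of $F$ outside $\Lambda$, and that is precisely the open problem. Second, your appeal to Bochner's theorem and the Schwartz regularity of $V_gg$ to conclude nonsingularity of the Gram matrix ``except on a meager set'' does not suffice even if it were made rigorous: the conjecture demands linear independence for \emph{every} configuration, and (as the paper notes via \cite{wliu16} and Theorem~\ref{1-nconfig}) almost-everywhere or all-but-one-equivalence-class results are already known yet do not close the problem, exactly because linear independence is neither an open nor a closed condition in the points. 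There is no proof here to compare with the paper's, because the paper contains none.
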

 While the statement of the problem seems simple, a variety of sophisticated tools such as the ergodic theorems, von Neumann algebra methods, number theory arguments, random Schr\"odinger operators,  harmonic analysis, operator theory, has  been used to prove the few known results.  Perhaps the lack of unifying theme in the proofs of the known results attests to  the difficulty of this problem.
 
  The HRT conjecture contains two fundamental data: the function $g \in L^2(\R)$ and the set of points $\Lambda=\{ (a_k, b_k)\}_{k=1}^N \subset \R^2$. Most of the known results either assume $g \in L^2$ and $\Lambda$ is restricted to some special family of points, or that $\Lambda$ is very general and restrictions are imposed on $g$.  We outline all the known results about the HRT conjecture of which we are aware and we refer to the surveys  \cite{Heil06, HeiSpe15} for more details.

\begin{prop}\label{known-results} The following statements hold.
\begin{enumerate}
\item[(i)] Conjecture~\ref{hrt} holds for any $\Lambda \subset \R^2$, when $g$ is compactly supported, or just supported within a half-interval $(-\infty, a], $ or $[a, \infty)$ \cite{HRT96}.
\item[(ii)] Conjecture~\ref{hrt} holds for any $\Lambda \subset \R^2$, when $g(x)=p(x)e^{-\pi x^2}$ where $p$ is a polynomial \cite{HRT96}.
\item[(iii)] Conjecture~\ref{hrt} holds for any $g \in L^{2}(\R)$, when $\Lambda$ is a finite set with  $\Lambda \subset A(\Z^2) + z$ where $A$ is a full rank $2\times 2$ matrix and $z\in \R^2$ \cite{Lin99}. In particular, Conjecture~\ref{hrt} holds when $\# \Lambda \leq 3$ for any $g \in L^2$ \cite{HRT96}.
\item[(iv)] Conjecture~\ref{hrt} holds for any $g \in L^2$, when $\# \Lambda =4 $ and two of the four points in $\Lambda$ lie on a line and the remaining two points lie on a second parallel line \cite{Dem10, DemZah12}.
\item[(v)] Conjecture~\ref{hrt-s} holds for any $g \in S(\R)$, when $\# \Lambda =4 $ and three  of the four points in $\Lambda$  lie on a line and the fourth point is off this line  \cite{Dem10}.
\item[(vi)] Conjecture~\ref{hrt} holds for any $\Lambda \subset \R^2$, when $\lim_{x \to \infty}|g(x)|e^{cx^{2}}=0$ for all $c>0$ \cite{BowSpee13}. 
\item[(vii)] Conjecture~\ref{hrt} holds for any $\Lambda \subset \R^2$, when $\lim_{x \to \infty}|g(x)|e^{cx\log x}=0$ for all $c>0$ \cite{BowSpee13}. 
\item[(viii)] Conjecture~\ref{hrt} holds when $g$ is ultimately positive, and  $\Lambda =\{(a_k, b_k)\}_{k=1}^N \subset \R^2$ is such that $\{b_k\}_{k=1}^N$ are independent over the rationals $\Q$ \cite{BeBo13}. 
\item[(ix)] Conjecture~\ref{hrt} holds for any $\# \Lambda =4 $, when $g$ is ultimately positive, and $g(x)$ and $g(-x)$ are ultimately decreasing \cite{BeBo13}.  
\item[(x)] Conjecture~\ref{hrt} holds for any $g \in L^{2}(\R)$, when $\Lambda$ consists of collinear points \cite{HRT96}.
\item[(xi)] Conjecture~\ref{hrt} holds for any $g \in L^{2}(\R)$, when $\Lambda$ consists of $N-1$ collinear and equi-spaced  points, with the last point located off this line \cite{HRT96}.
\end{enumerate}
\end{prop}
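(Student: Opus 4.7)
Since this proposition catalogs eleven statements proved independently in the cited references, my plan is to sketch the methods by family rather than reprove each item. The results split naturally into three groups according to the technique employed, and I would organize any unified write-up along those same lines.

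The first group exploits support or analyticity. For (i) I would analyze a putative relation $\sum_{k} c_k M_{b_k} T_{a_k} g = 0$ with $g$ supported in $(-\infty, a]$ by ordering the translation parameters $a_k$ in increasing order and restricting the identity to the far right of the combined support; only the terms with maximal $a_k$ survive there, after which linear independence of complex exponentials on an interval closes the argument. The same reasoning, after a symplectic change of coordinates that aligns the points on a vertical line, would handle the collinear case (x), and an equispacing argument together with the Zak transform would yield (xi). For (ii) the plan is to pass to the Bargmann--Fock picture, where time-frequency shifts of a Gaussian become reproducing-kernel translates and a dependence relation forces an entire function of finite order to vanish on a set incompatible with its growth; the polynomial prefactor is absorbed by an $L^2$-density argument. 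Items (vi) and (vii) push this philosophy to the sharp decay threshold, using super-Gaussian or slightly weaker decay to guarantee that the short-time Fourier transform with a Gaussian window is entire of controlled order, so its zero set is too sparse to support a dependence relation.

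The second group is algebraic. For (iii), following Linnell, I would realize the time-frequency shifts over a translated lattice as generators of a twisted group von Neumann algebra of the discrete Heisenberg group and invoke the zero-divisor property, which gives linear independence in the strongest possible form; the $N\leq 3$ consequence then follows because any three points of $\R^2$ can be mapped into $A\Z^2 + z$ by an affine symplectic transformation. For (iv) and (v) I would overlay number-theoretic and Fourier-analytic input: after symplectic normalization the four-point configurations reduce to a diophantine problem for ratios of modulation parameters, settled in (iv) by the continued-fraction analysis of Demeter--Zaharescu and in (v) by approximation arguments using the Schwartz decay of $g$.

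The third group, (viii) and (ix), is driven by positivity. Assuming $g$ is ultimately positive and $\{b_k\}$ is linearly independent over $\Q$, my plan would be to pair the putative relation with a translate of itself and integrate; Weyl equidistribution of $(b_k t)_k \bmod 1$ averages the oscillatory cross terms to zero, and the surviving diagonal term, being positive, forces the coefficients to vanish. The four-point refinement (ix) would graft monotonicity of $g(\pm x)$ onto the same averaging scheme to remove the $\Q$-independence hypothesis. The main obstacle common to all three families, and precisely the reason this list remains a patchwork rather than a theorem, is that each technique is confined to its own regime and none is stable under the adjunction of a single generic point: as soon as one enlarges $\Lambda$ by one element outside the controlled configuration, the structural hypothesis exploited in the base case is destroyed. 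This is the very difficulty that the inductive extension-and-restriction framework introduced in the remainder of the paper is designed to address.
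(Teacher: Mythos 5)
This proposition is a survey catalog: the paper offers no proof of its own, deferring entirely to the cited references, so there is no internal argument to compare yours against. Your grouping of the eleven items by technique is sensible and your sketches broadly match the actual proofs in the literature, but two of them need correction. First, for (x) you claim ``the same reasoning'' as the one-sided-support argument of (i) applies after aligning the points on a vertical line; but (x) is asserted for arbitrary $g\in L^2(\R)$, whose support is in general unbounded in both directions, so there is no ``far right of the support'' to restrict to. The collinear case instead needs the observation that a dependence relation forces a nontrivial exponential sum $\sum_k c_k e^{2\pi i b_k x}$ to vanish on the (positive-measure) support of $g$, which is impossible since such a sum extends to a nonzero entire function with a discrete zero set; note also that the paper itself later reproves (x) by an entirely different route, via the Gramian, Bochner's theorem, and strict positive definiteness (Theorem~\ref{hrt-colinear}). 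Second, your account of (vi)--(vii) attributes to Bownik--Speegle an entire-function/zero-set mechanism, but their proofs proceed by iterating the dependence relation to propagate decay estimates on $|g|$ along translates and deriving a contradiction with the assumed super-exponential decay; no analyticity of the STFT is invoked, and indeed the $e^{cx\log x}$ threshold in (vii) is precisely what that recursive bookkeeping yields. The remaining sketches --- the edge-of-support argument for (i), the Bargmann--Fock picture for (ii), Linnell's twisted group von Neumann algebra and zero-divisor argument for (iii), the number-theoretic and ergodic analyses of Demeter and Demeter--Zaharescu for (iv)--(v), and the positivity/equidistribution arguments of Benedetto--Bownik for (viii)--(ix) --- are faithful to the cited proofs at the level of detail you give.
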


We note that there is some redundancy in Proposition~\ref{known-results} as part (vii) implies parts (i), (ii), and (vi). Nonetheless, we include all these results to give an historical perspective on the HRT conjecture.  In addition to these,  perturbation arguments \cite{HRT96} have  been used on either the function $g$ or the set $\Lambda$ to get related results. A spectral result related to the HRT  has been presented in \cite{RVBal08, BalKri10},  and estimates of frame bounds for Gabor systems related to the HRT conjecture have appeared in \cite{ChristenLind01, Groc14}. A connection between the HRT, the Bargmann-Fock space and the Segal-Bargmann transform was presented in \cite{Stroock_HRT}.  Other results concerning the HRT can be found in \cite{BowSpee10, DemZub13, Zioma}, and  for an overview of the status of the conjecture we refer to \cite{Heil06, HeiSpe15}. When $g \in L^{2}(\R^d)$, $d\geq 2$,  and $\Lambda \subset \R^{2d}$ not much is known about the conjecture, see \cite{BowSpee16}. We refer  to \cite{Ros08} for a related problem for pure translation systems, and to \cite{Kut02} for some generalizations of the conjecture.

A set $\Lambda$  of the form given in (iv) or (v) of Proposition~\ref{known-results}, is referred to as a $(2,2)$ configuration  and $(1,3)$ configuration, respectively. More generally, 
\begin{deft}
An $(n, m)$ configuration is a collection of $n+m$ distinct points in the plane, such that there exist $2$ distinct parallel lines such that one of them contains exactly $n$ of the points and the other one contains exactly $m$ of the points. 
\end{deft}

One of the goals of this paper is to present two different approaches to investigate the HRT conjecture. On the one hand, we prove  an \emph{extension principle} and use it to attack the HRT conjecture.  No such extension or other inductive methods related to the HRT have ever been proved. More specifically, knowing that the Conjecture holds for a given function $g \in L^{2}(\R)$ and a given set $\Lambda=\{(a_k, b_k)\}_{k=1}^{N} \subset \R^2$, we identify the set of all (new) points $(a, b) \in \R^2 \setminus \Lambda$ such that the conjecture remains true for the same function $g$ and the new set $\Lambda'=\Lambda \cup \{(a,b)\}$. On the other hand, we consider the related \emph{restriction principle} which asks the following question; knowing that the HRT is true for a specific set of $N+1$ points and $g$, can one establish the conjecture for a family of $N$ related points and the same function $g$? 
To answer these questions we introduce a real-valued function that is generated by the two data in the HRT conjecture, namely, the function $g$ and the set $\Lambda$. As we shall show, this function is derived from the Gramian of $\mathcal{G}(g, \Lambda)$ and is based on a fundamental time-frequency analysis tool: the short-time Fourier transform. Using this function along with  refinements of some of the techniques introduced by Demeter in \cite{Dem10} allow us to recover some known results and  establish new ones. In particular, the main contributions of this paper are: \newline
$\bullet$ a proof that  HRT conjecture holds for all $(1,3)$ configurations when $g$ is real-valued, \newline
$\bullet$ a proof that HRT holds for a family of symmetric $(2,3)$ configurations, \newline
$\bullet$ a proof that HRT holds for  a large family of $4$ points (not in $(1,3)$ nor $(2,2)$ configurations) and real-valued functions in $L^2(\R)$.

Furthermore, as a byproduct of our approach we obtain:\newline
$\bullet$  a new proof of HRT for collinear points, \newline
$\bullet$ a new proof of HRT for sets of $3$ unit-lattice points and real-valued functions.

The rest of the paper is organized as follows. In Section~\ref{sec2}, we introduce some of the technical tools needed to state our results. We then use Bochner's theorem to provide a new proof of the HRT conjecture for collinear points (Theorem~\ref{hrt-colinear}). We also motivate the extension principle by offering a new proof of the HRT conjecture for $3$ points on the unit lattice and real-valued functions  (Proposition~\ref{L2-3}). 
In Section~\ref{subsec3.1} we  introduce and collect the main properties of the extension function which is the basis of the extension principle we propose. Subsequently, we prove in Section~\ref{subsec3.2} that there exists  at most one (equivalence class of) $(1,n)$ configuration for which the HRT conjecture could fail whenever $n\geq 3$ (Theorem~\ref{1-nconfig}). Furthermore, when the generator is real-valued we show that   the HRT holds for all $(1,3)$ configurations (Theorem~\ref{1-3config}). In Section~\ref{subsec3.3} we introduce the restriction principle. For this case, we refine Demeter's ``conjugate trick'' argument to establish both Conjecture~\ref{hrt} (Theorem~\ref{3-2config}) and  Conjecture~\ref{hrt-s} (Theorem~\ref{smooth-3-2config}) for a family of symmetric $(2,3)$ configurations.  Subsequently, we apply the restriction principle to  prove Conjecture~\ref{hrt} for real-valued functions and a related family of $4$ points that are not  $(1,3)$ nor $(2,2)$ configurations (Corollary~\ref{4points-hrt}).

\section{Preliminaries and motivation}\label{sec2}
In this section, we collect some properties of the Short-Time Fourier Transform
(STFT)  as  well as some results concerning positive definite matrices, see Section~\ref{subsec2.1}. Using the Gramian of $\mathcal{G}(g, \Lambda)$ and Bochner's theorem we then give a new proof of the HRT conjecture for collinear points, see Section~\ref{subsec2.2}. Finally, in Section~\ref{subsec2.3} we revisit the HRT for $3$ points and provide a new proof of the validity of the conjecture in this case. This new proof serves as a motivation for the extension principle that we propose. The methodology we develop below is fundamentally based on the analysis of the Gramian of $\mathcal{G}(g, \Lambda)$. In particular, the notions of positive definiteness of functions and matrices constitute the overarching themes of this methodology.
\subsection{Preliminaries}\label{subsec2.1}

Let $f, g  \in L^{2}(\R)$. The Short-Time Fourier Transform
(STFT) of a function $f$ with respect to a window $g$ is
$$V_g f(x, y)=\int_{\R} f(t)\, \overline{g(t-x)} \, e^{-2\pi i y  t} \, dt.$$
It is easy to prove that $V_g f$ is 
a  bounded  uniformly continuous function on $\R^2$, and that $$\lim_{|x|, |y|\to \infty}V_gf(x, y)=0;$$see
 \cite{Groc2001}. We will also need the following the orthogonality and covariance properties of the STFT:
given $g_i, f_i, \in L^2(\R)$, $i=1, 2,$ we have 
\begin{equation}\label{ortho-Vgg}
 \ip{V_{g_{1}}f_{1}}{V_{g_2}f_2}=\ip{f_{1}}{f_2}\overline{\ip{g_1}{g_2}},
\end{equation} and 
\begin{equation}\label{stft-TF}
 V_{g}(T_{a}M_{b}f)(x, y)=e^{-2\pi i ay}V_gf(x-a, y-b),
\end{equation} see \cite[Lemma 3.1.3]{Groc2001}, and \cite[Theorem 3.2.1]{Groc2001}.

We will also use the following formula whenever it is well defined: 
\begin{equation}\label{FT-prod-stft}
\mathcal{F}_2(V_{g_{1}}f_1 \overline{V_{g_{2}}f_2})(\xi, \eta) = \big(V_{f_{2}}f_1 \overline{V_{g_{2}}g_1}\big)(-\eta, \xi),
\end{equation} where $\mathcal{F}_2$ denotes the two dimensional Fourier transform. We refer to \cite{GroZim01} for a proof of this statement. 

We need some facts about positive definite matrices. We refer to \cite[Section 7. 7]{HornJohnson85} for details. In particular, given $N \times N$ Hermitian matrices $A,$ and $B$, we write $A \succ B$ if $A-B$ is positive definite. We will repeatedly use the following theorem.

\begin{thm}\cite[Theorem 7.7.6]{HornJohnson85} \label{psd-matrix}
Let $E$ be  an Hermitian $N \times N$ matrix such that $$E=\begin{bmatrix}A & B \\ B^* & C\end{bmatrix},$$ where $A, C$ are square matrices. Then, $E$ is positive definite if and only if $A$ is positive definite and $C\succ B^*A^{-1}B.$ 
\end{thm}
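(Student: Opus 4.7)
The plan is to reduce both implications simultaneously to the elementary fact that invertible congruence preserves positive definiteness. The key ingredient is the block factorization
\begin{equation*}
E = L^* D L, \qquad L = \begin{bmatrix} I & A^{-1}B \\ 0 & I \end{bmatrix}, \qquad D = \begin{bmatrix} A & 0 \\ 0 & C - B^*A^{-1}B \end{bmatrix},
\end{equation*}
which is valid as soon as $A$ is invertible (noting that $A^* = A$, since the diagonal blocks of the Hermitian matrix $E$ are themselves Hermitian). This identity is checked directly by block multiplication. Since $\det L = 1$, the map $v \mapsto Lv$ is a bijection of $\mathbb{C}^N$, and the relation $v^* E v = (Lv)^* D (Lv)$ shows that $E$ is positive definite if and only if $D$ is.

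For the forward direction, I would first observe that testing the quadratic form of $E$ against vectors of the form $v = \begin{bmatrix} u \\ 0 \end{bmatrix}$ gives $v^* E v = u^* A u$, so $E \succ 0$ immediately forces $A \succ 0$; in particular $A$ is invertible and the factorization above is available. Combining $E \succ 0 \Leftrightarrow D \succ 0$ with the standard fact that a block-diagonal Hermitian matrix is positive definite precisely when each of its diagonal blocks is, yields $C - B^* A^{-1}B \succ 0$, i.e.\ $C \succ B^* A^{-1} B$.

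Conversely, if $A \succ 0$ and $C \succ B^* A^{-1} B$, then both diagonal blocks of $D$ are positive definite, so $D \succ 0$, and the congruence $E = L^* D L$ with invertible $L$ transfers positive definiteness back to $E$.

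I do not expect a serious obstacle: the entire argument rests on the algebraic identity above, and verifying it is a short block computation. The only conceptual content is the observation that invertibility of $L$ (guaranteed by its unit-triangular block form) allows positive definiteness to be propagated in both directions along the congruence.
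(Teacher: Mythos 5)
Your proof is correct and uses the same Schur-complement block congruence that the paper itself relies on: the identity $E=L^*DL$ with unit-triangular $L$ is exactly the factorization the author writes out (with $X=-A^{-1}u$) in the proof of Corollary~2, and the cited source's argument is the same. Nothing further is needed.
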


In our setting,  $$E=(\ip{f_k}{f_\ell})_{k, \ell=1}^N$$will be the Gramian of a set of $N$ functions $\{f_k\}_{k=1}^N \subset L^{2}(\R)$. As a consequence, $E$ is automatically positive semidefinite, i.e.,  $E\succeq 0$. Furthermore, $A$ and $C$ will always be positive definite matrices. In particular, we will consider the case where $A$ is an $(N-1)\times (N-1)$ positive definite matrix, $C=1$, and $B=u$ will be a vector in $\C^{N-1}$. In this case we will make repeated use of the following corollary of Theorem~\ref{psd-matrix}.

\begin{cor}\label{version-psd} With the above notations the following assertions hold:
\begin{enumerate}
\item $E\succeq 0$ if and only if $\ip{A^{-1}u}{u}\leq 1.$ Furthermore, $E\succ 0$ if and only if $\ip{A^{-1}u}{u}< 1.$ Consequently, $E$ is singular if and only if $\ip{A^{-1}u}{u}= 1.$
\item $\det{E}=(1-\ip{A^{-1}u}{u})\det{A}.$
\end{enumerate}
\end{cor}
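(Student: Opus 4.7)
The plan is to deduce both assertions directly from the Schur complement decomposition of $E$. Since $A$ is positive definite (hence invertible), I can factor
\begin{equation*}
E=\begin{bmatrix} A & u \\ u^* & 1 \end{bmatrix} = \begin{bmatrix} I_{N-1} & 0 \\ u^*A^{-1} & 1 \end{bmatrix} \begin{bmatrix} A & 0 \\ 0 & 1-\ip{A^{-1}u}{u} \end{bmatrix} \begin{bmatrix} I_{N-1} & A^{-1}u \\ 0 & 1 \end{bmatrix},
\end{equation*}
where I have used that $u^*A^{-1}u = \ip{A^{-1}u}{u}$ is real because $A^{-1}$ is Hermitian positive definite. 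Both triangular factors have determinant $1$, so taking determinants of both sides yields part (2) immediately.

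For the positive definite part of (1), I would apply Theorem~\ref{psd-matrix} with $B=u$ and the $1\times 1$ block $C=1$: since $A\succ 0$ by hypothesis, the theorem gives $E\succ 0$ if and only if $1 > u^*A^{-1}u$, i.e., $\ip{A^{-1}u}{u}<1$. For the semidefinite equivalence I would pass to the matrix $E_\epsilon$ obtained from $E$ by replacing the scalar $1$ by $1+\epsilon$, apply the positive definite case to get $\ip{A^{-1}u}{u}<1+\epsilon$, and let $\epsilon\to 0^+$; the converse direction follows directly from the factorization above, since if $\ip{A^{-1}u}{u}\leq 1$ then the middle diagonal factor is positive semidefinite, and conjugation by the (real) triangular factors preserves this property.

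The singularity statement then falls out in two ways. Combining the two equivalences just established shows that $E\succeq 0$ but $E\not\succ 0$ precisely when $\ip{A^{-1}u}{u}=1$; alternatively, it is immediate from part (2), since $\det A>0$ forces $\det E = 0 \iff \ip{A^{-1}u}{u}=1$. There is no real obstacle here, as the corollary is essentially a specialization of the cited Schur complement theorem to the case in which the lower-right block is the scalar $1$. The only point requiring minor care is the sesquilinearity convention, namely that $u^*A^{-1}u$ matches $\ip{A^{-1}u}{u}$ under the inner-product convention fixed by \eqref{ortho-Vgg}.
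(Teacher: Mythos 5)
Your proof is correct and follows essentially the same route as the paper: both rest on the block-triangular Schur complement congruence that diagonalizes $E$ into $\operatorname{diag}(A,\,1-\ip{A^{-1}u}{u})$, from which the semidefiniteness criterion and the determinant formula both follow. One trivial remark: the triangular factors need not be real (since $u\in\C^{N-1}$), but this is immaterial because congruence $M^*DM$ preserves positive semidefiniteness for any complex $M$.
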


\begin{proof}
The proof is given in~\cite[Theorem 7.7.6]{HornJohnson85}. However, we outline it for the sake of completeness. 

We are given that $A$ is positive definite. Now assume that $E$ is positive semidefinite and let $X=-A^{-1}u$. Then 
$$\begin{bmatrix}I & 0\\X^{*} & 1 \end{bmatrix} \begin{bmatrix}A & u \\ u^{*} & 1\end{bmatrix}\begin{bmatrix}I & X \\ 0 & 1\end{bmatrix}=\begin{bmatrix}A &0 \\ 0 & 1-\ip{A^{-1}u}{u}\end{bmatrix}$$ is positive semidefinite. Thus $1\geq \ip{A^{-1}u}{u}$. And the converse is trivially seen. 

The last two parts easily follow as well. 
\end{proof}

\subsection{Revisiting the HRT for collinear points}\label{subsec2.2}
To the best of our knowledge no result on the HRT conjecture has been obtained through the  analysis of the Gramian of $\mathcal{G}(g, \Lambda)=\{e^{2\pi i b_k \cdot}g(\cdot - a_k)\}_{k=1}^{N}.$ Recall that the Gramian $G_g$ of $\mathcal{G}(g, \Lambda)=\{e^{2\pi i b_k \cdot}g(\cdot - a_k)\}_{k=1}^{N}$ is the matrix given by 
\begin{align}\label{coline-gram}
G_{g}&=(\ip{e^{2\pi i b_k\cdot}g(\cdot -a_k)}{e^{2\pi i b_\ell \cdot}g(\cdot - a_\ell)})_{k, \ell = 1}^N\\ \notag
&=(e^{-2\pi i a_k(b_\ell -b_k)}V_gg(a_\ell - a_k, b_\ell-b_k))_{k, \ell=1}^N.
\end{align}

It follows that $G_g$ is positive semidefinite matrix and that the HRT conjecture holds if and only if $G_g$ is strictly positive definite. In this section, we motivate our approach to analyze this Gramian by offering  a new proof of the  HRT conjecture for collinear points. While this result is well-known \cite{HRT96}, the new proof we provide illustrates the role of positive definiteness vis-a-vis the HRT conjecture. We will need the following version of Bochner's theorem, and refer to \cite[Theorem 4.18]{FollAHA95} for more on the classical  Bochner's theorem. We recall that a continuous complex-valued function $f:\R^d \to \C$ is positive definite if 
\begin{equation}\label{def-pos-def-fun}
\sum_{j=1}^N\sum_{k=1}^Nc_j\overline{c}_{k}f(x_j-x_k)\geq 0
\end{equation} for any pairwise distinct points $x_1,  x_2, \hdots, x_N\in \R^d$, and $(c_k)_{k=1}^N \in \C^N$. The function $f$ is said to be strictly positive if equality holds in~\eqref{def-pos-def-fun} only when $c_k=0$ for all $k=1, 2, \hdots, N$. 

\begin{prop}\cite[Proposition 2.1]{Derr10}\label{posdef}
A continuous  complex-valued function $f$ is positive definite if and only if $f=\hat{\mu}$ where $\mu$ is a non-negative finite Borel measure on $\R$. Furthermore, $f$ is strictly positive definite if and only if there does not exist a non-zero trigonometric polynomial $m$ vanishing on the support of $\mu$, i.e., such that  $\int_\R m d\mu=0$.
\end{prop}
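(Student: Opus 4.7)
The classical part of the statement is Bochner's theorem in its usual form. The forward direction ($f=\hat\mu\Rightarrow$ positive definite) is a direct calculation: for any distinct $x_1,\dots,x_N\in\R$ and $(c_k)\in\C^N$, Fubini gives
$$\sum_{j,k=1}^{N} c_j\overline{c_k}\,f(x_j-x_k)=\int_{\R}\Bigl|\sum_{j=1}^{N}c_j e^{-2\pi i\xi x_j}\Bigr|^{2}\,d\mu(\xi)\geq 0.$$
For the reverse direction I would simply quote the classical Bochner theorem in the form of \cite[Theorem 4.18]{FollAHA95}, which produces the non-negative finite Borel measure $\mu$ on $\R$ with $f=\hat\mu$.

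The real content is the strict version, and the same identity is the engine. Setting $m(\xi)=\sum_{j=1}^{N}c_j e^{-2\pi i\xi x_j}$, the computation above reads
$$\sum_{j,k=1}^{N}c_j\overline{c_k}\,f(x_j-x_k)=\int_{\R}|m(\xi)|^{2}\,d\mu(\xi).$$
Thus the left-hand side vanishes if and only if $|m|^{2}=0$ $\mu$-a.e., and since $m$ is continuous this is equivalent to $m\equiv 0$ on $\mathrm{supp}(\mu)$. This already gives one implication: if some non-zero trigonometric polynomial $m(\xi)=\sum c_j e^{-2\pi i\xi x_j}$ with pairwise distinct frequencies $x_j$ vanishes on $\mathrm{supp}(\mu)$, then $\sum c_j\overline{c_k}f(x_j-x_k)=0$ with $(c_k)\neq 0$, so $f$ is not strictly positive definite.

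For the converse I would argue contrapositively. Suppose $f$ is not strictly positive definite, so there exist pairwise distinct $x_1,\dots,x_N$ and coefficients $(c_k)$, not all zero, with $\sum c_j\overline{c_k}f(x_j-x_k)=0$. By the same identity $\int|m|^{2}d\mu=0$, hence $m$ vanishes on $\mathrm{supp}(\mu)$, and $m$ is a non-zero trigonometric polynomial (its frequencies are distinct and at least one $c_k\neq 0$). This is exactly the statement of the proposition.

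The only delicate point I anticipate is the standing convention on what counts as a ``trigonometric polynomial vanishing on $\mathrm{supp}(\mu)$'': one must be careful that the frequencies $\{x_j\}$ are pairwise distinct so that the corresponding exponentials are linearly independent, ensuring that ``non-zero'' as a polynomial is equivalent to ``not all coefficients zero''. Everything else is bookkeeping on top of the classical Bochner statement.
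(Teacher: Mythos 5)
Your argument is correct. Note that the paper itself gives no proof of Proposition~\ref{posdef}: it is quoted from \cite[Proposition 2.1]{Derr10}, with the classical part deferred to \cite[Theorem 4.18]{FollAHA95}. What you have written is the standard proof, and its key identity
$$\sum_{j,k=1}^{N}c_j\overline{c_k}\,f(x_j-x_k)=\int_{\R}\Bigl|\sum_{j=1}^{N}c_j e^{-2\pi i\xi x_j}\Bigr|^{2}\,d\mu(\xi)$$
is exactly the computation the paper carries out concretely (with $d\mu=|\hat g|^2\,d\xi$) inside the proof of Theorem~\ref{hrt-colinear}, so your proposal is fully consistent with how the result is used. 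Your closing caveat is well placed: the "i.e., $\int_\R m\,d\mu=0$" gloss in the statement is loose (vanishing on $\mathrm{supp}(\mu)$ is what the argument actually requires and delivers, via $\int|m|^2\,d\mu=0$ and continuity of $m$), and the pairwise distinctness of the frequencies is what makes "non-zero trigonometric polynomial" equivalent to "not all coefficients zero."
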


Let $\Lambda=\{(a_k, b_k)\}_{k=1}^N \subset \R^{2}$ be a set of collinear points. Then, by rotating and translating we can assume that $\Lambda=\{(a_k, 0)\}_{k=1}^N \subset \R^{2}$ with $a_1=0$, \cite{HRT96}. In this case, the Gramian of $\mathcal{G}(g, \Lambda)$ takes the form 

\begin{equation*}
G_{g}=(\ip{g(\cdot -a_k)}{g(\cdot - a_\ell)})_{k, \ell = 1}^N= (\ip{\hat{g}}{e^{-2\pi i (a_k-a_\ell) \cdot}\hat{g}})_{k, \ell =1}^N.
\end{equation*}

We can now give a new proof of the HRT conjecture when the points are collinear. 

\begin{thm}\label{hrt-colinear}
Let $0\neq g \in L^2(\R)$, and $\Lambda=\{(a_k, 0)\}_{k=1}^N \subset \R^{2}$ with $a_0=0$. Then $$\mathcal{G}(g, \Lambda)=\{g(\cdot - a_k)\}_{k=1}^{N}$$ is linearly independent. 
\end{thm}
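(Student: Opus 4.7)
The plan is to follow the excerpt's own hint and analyze the Gramian $G_g$ directly via Bochner's theorem (Proposition~\ref{posdef}). After rotating and translating we may assume $\Lambda = \{(a_k,0)\}_{k=1}^N$ with $a_1=0$. Using Plancherel, the entries of $G_g$ can be written
\[
(G_g)_{k,\ell} = \langle T_{a_k}g, T_{a_\ell}g\rangle = \int_{\R} |\hat g(\xi)|^2\, e^{-2\pi i (a_k-a_\ell)\xi}\, d\xi = \hat\mu(a_k-a_\ell),
\]
where $d\mu(\xi) := |\hat g(\xi)|^2\, d\xi$ is a finite non-negative Borel measure on $\R$ (finite because $g\in L^2$, and non-zero because $g\neq 0$). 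So the first step is to set up this Plancherel identification and record that $G_g$ has the structure $(\hat\mu(a_k-a_\ell))_{k,\ell}$.

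Next, the first half of Proposition~\ref{posdef} gives that $\hat\mu$ is a positive definite function on $\R$, and hence $G_g \succeq 0$ automatically. To obtain \emph{strict} positivity (which, as observed right after~\eqref{coline-gram}, is equivalent to linear independence of $\mathcal{G}(g,\Lambda)$), I would appeal to the second half of Proposition~\ref{posdef}: $\hat\mu$ is strictly positive definite if and only if there is no non-zero trigonometric polynomial vanishing on $\operatorname{supp}\mu$. Equivalently, I would spell out the contrapositive: if $\sum_{k=1}^N c_k \overline{c_\ell}\, \hat\mu(a_k-a_\ell) = 0$ with some $c_k \neq 0$, then
\[
0 = \sum_{k,\ell} c_k \overline{c_\ell}\int |\hat g(\xi)|^2 e^{-2\pi i(a_k-a_\ell)\xi}\, d\xi = \int_{\R} |m(\xi)|^2\, d\mu(\xi),
\]
where $m(\xi) = \sum_{k=1}^N c_k e^{-2\pi i a_k \xi}$. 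So $m$ must vanish $\mu$-almost everywhere, i.e., on a set of positive Lebesgue measure (since $d\mu=|\hat g|^2 d\xi$ and $\hat g \not\equiv 0$).

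The step I expect to be the main (and really only) obstacle is ruling out that such an $m$ can vanish on the support of $\mu$. I would handle it by observing that $m$ extends to an entire function of $\xi \in \C$ (a finite linear combination of exponentials), so its zero set in $\R$ is either discrete or all of $\R$. Since it must vanish on a set of positive Lebesgue measure, it vanishes identically; the linear independence of the distinct characters $\{e^{-2\pi i a_k \xi}\}_{k=1}^N$ (which can be seen, e.g., from a Vandermonde argument at $N$ distinct values of $\xi$, or from the uniqueness of Fourier coefficients) then forces $c_1=\cdots=c_N=0$, a contradiction. This yields $G_g \succ 0$, and the theorem follows.
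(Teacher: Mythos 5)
Your proposal is correct and follows essentially the same route as the paper: identify the Gramian entries as $\hat\mu(a_k-a_\ell)$ with $d\mu=|\hat g|^2\,d\xi$, reduce strict positive definiteness to the non-vanishing of $\int|m|^2\,d\mu$ for a non-zero exponential sum $m$, and invoke Proposition~\ref{posdef}. Your explicit analyticity argument showing that a non-zero trigonometric polynomial cannot vanish on a set of positive Lebesgue measure is a welcome detail that the paper leaves implicit.
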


\begin{proof} For $g \in L^2(\R)$ with $\|g\|_2=1$ let $h(\xi)=|\hat{g}(\xi)|^2$. We note that $h$ is  non-negative, non-identically $0$, and $h \in L^1(\R)$. Let $\mu$ be the finite nonnegative Borel measure whose density with respect to the Lebesgue measure is the function $h$. The function $\Phi$ defined by $$\Phi(x)=\hat{h}(x)=\hat{\mu}(x)=\int_{\R}h(\xi) e^{-2\pi i x \cdot \xi}d\xi$$ is continuous. Consequently, by Proposition~\ref{posdef}, $\Phi$  is positive definite. It remains to show that $\Phi$ is strictly positive definite. To do this, suppose that $m$ is a non-zero trigonometric polynomial given by $m(x)=\sum_{k=1}^Kc_ke^{2\pi i \xi_k x}$ where $c_k$ are complex numbers (not all zeros), and $\xi_k$ are pairwise distinct real numbers.  It follows that 
\begin{align*}
\sum_{j=1}^K \sum_{k=1}^K \overline{c_j}\ c_k \ \Phi(\xi_j- \xi_k) &=\sum_{j=1}^K \sum_{k=1}^K \overline{c_j}\ c_k \ \int_\R e^{-2\pi i(\xi_j-\xi_k)x}\ d\mu(x)\\
& =\int_\R \ \sum_{j=1}^K\overline{c_j}\ e^{-2\pi i \xi_j x}\, \sum_{k=1}^Kc_k\ e^{2\pi i \xi_k x}\ d\mu(x)\\
&=\int_\R |m(x)|^2\ d\mu(x)\\
&=\int_\R |m(x)|^2\ |\hat{g}(x)|^2\ dx.
\end{align*} This last integral vanishes only when $m$ vanishes on the support of $\mu$, which is the support of $g$. We can now conclude that $\Phi=\hat{h}=\hat{\mu}$ is strictly positive definite. However, by~\eqref{coline-gram} we see that the Gramian of $ \{g(\cdot - a_k)\}_{k=1}^{N}$ is exactly the matrix 
$$G_{g}=(\ip{g(\cdot -a_k)}{g(\cdot - a_\ell)})_{k, \ell = 1}^N=(\Phi(a_k - a_\ell))_{k, \ell =1}^N.$$ 
Therefore,  $G_g$ is strictly positive definite. 
\end{proof}

\subsection{Motivation: The case of three points revisited}\label{subsec2.3}
We now motivate our approach using the analysis of the Gramian of $\mathcal{G}(g, \Lambda)=\{g(\cdot - a_k) e^{2\pi i b_k\cdot}\}_{k=1}^{3}$ for $3$ points $\{(a_k, b_k)\}_{k=1}^3 \subset \Z^2$. Furthermore, we suppose that the function $g$ is real-valued. 
We note that following \cite{HRT96}, without loss of generality any set of three distinct  points can be transformed (through area preserving transformations)  into $\{(0,0), (0, 1), (a, b)\}$ where $(a, b) \in \Z^2\setminus \{(0,0), (0,1)\}$. We also know that the HRT conjecture is always true for any set of two distinct points. Thus, $\{g, M_1g\}$ is linearly independent and our task is to show that for any other point $(a, b) \in \Z^2\setminus \{(0,0), (0,1)\}$, $\{g, M_1g, M_bT_ag\}$ remains  linearly independent. 

Observe that the Gramian $G_g$ of $\{g, M_1g, M_bT_ag\}$ can be written in the following block structure:

\begin{equation}\label{gram3}
G_g=\begin{bmatrix}A & u(a,b)\\ u(a,b)^* & 1\end{bmatrix}
\end{equation}

 where $$A=\begin{bmatrix} 1& \alpha \\ \overline{\alpha} &1\end{bmatrix}\qquad {\textrm and} \qquad u(a,b)=\begin{bmatrix}V_gg(a, b)\\ V_gg(a, b-1)\end{bmatrix}$$ with $\alpha=V_{g}g(0,1)$, and $u(a,b)^*$ denoting the conjugate adjoint of $u(a,b)$. Note that  $|\alpha| =|\ip{g}{M_{1}g}|<\|g\|_2\|M_1g\|_2=\|g\|_2^2=1$  since $\{g(\cdot) , e^{2\pi i \cdot}g(\cdot)\}$ is linearly independent. We know that $G_g$ is positive semidefinite and we wish to show that it is strictly positive definite. Appealing to Corolloary~\ref{version-psd}, we see that $$0\leq F(a,b)=\ip{A^{-1}u(a,b)}{u(a,b)}\leq 1$$  and that  $0\leq F(a,b)<1$ if and only if $\{g, M_1g, M_bT_ag\}$  is linearly independent. Thus, the function $F: \R^2 \to \R$ has range in $[0,1]$ and $1$ is its maximum value.

We can now prove the following result which serves both as a motivation to our approach and gives a new proof for the HRT conjecture for any $3$ points on the integer lattice.

\begin{prop}\label{L2-3}
Let $0\neq g \in L^2(\R)$ be a real-valued function with $\|g\|_2=1$, and $\Lambda=\{(0,0), (0, 1), (a,b)\}$, with $(a, b)\in \Z^2 \setminus \{(0,0), (0,1)\}$. Then the function $F$ defined above  achieves its global maximum value $1$ only for $(a, b) \in \{(0,0), (0,1)\}$. Consequently,
Conjecture~\ref{hrt} holds for  $\Lambda$ and $g$. 
\end{prop}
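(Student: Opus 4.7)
The plan is to study when $F(a,b)=1$ and to show that, for real-valued $g$, this forces $(a,b)\in\{(0,0),(0,1)\}$. By Corollary~\ref{version-psd} applied to the block decomposition in~\eqref{gram3}, $F(a,b)=1$ is equivalent to the Gramian of $\{g, M_1g, M_bT_ag\}$ being singular, hence to the existence of a nontrivial linear dependence among these three functions. Since $\{g,M_1g\}$ is already independent (because $|\alpha|<1$), any such dependence can be normalized to the form
\[
M_bT_ag \;=\; c_1 g + c_2 M_1g
\]
for some $c_1,c_2\in\C$. The task therefore reduces to ruling out this identity for every $(a,b)\in\Z^2\setminus\{(0,0),(0,1)\}$.

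The key step is a conjugation trick in the spirit of~\cite{Dem10}. Since $g$ is real-valued, taking complex conjugates in the displayed equation yields $M_{-b}T_ag=\bar c_1 g+\bar c_2 M_{-1}g$, while applying the modulation $M_{-2b}$ directly to both sides gives $M_{-b}T_ag=c_1 M_{-2b}g+c_2 M_{1-2b}g$. Equating these two expressions for $M_{-b}T_ag$ and rearranging produces
\[
c_1 M_{-2b}g+c_2 M_{1-2b}g-\bar c_1 g-\bar c_2 M_{-1}g \;=\; 0,
\]
a linear dependence among four pure modulations of $g$ at the (collinear) frequencies $\{-2b,\,1-2b,\,0,\,-1\}$. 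Because $b\in\Z$, the value $b=1/2$ never arises, and so coincidences among these four frequencies can occur only for $b\in\{0,1\}$.

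For $b\in\Z\setminus\{0,1\}$ the four frequencies are pairwise distinct, and Theorem~\ref{hrt-colinear} forces all four coefficients to vanish; in particular $c_1=c_2=0$, so $M_bT_ag=0$ and therefore $g\equiv 0$, contradicting $g\neq 0$. For $b=0$ (which forces $a\neq 0$) the relation collapses to $(c_1-\bar c_1)g+c_2 M_1 g-\bar c_2 M_{-1}g=0$, and Theorem~\ref{hrt-colinear} applied to the three distinct frequencies $\{-1,0,1\}$ yields $c_1\in\R$ and $c_2=0$, so $T_ag=c_1 g$. Comparing $L^2$-norms then forces $|c_1|=1$, so $|g|$ is $a$-periodic on $\R$, and together with $g\in L^2(\R)$ this makes $g\equiv 0$, a contradiction. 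The case $b=1$ is entirely symmetric and reduces in the same way to an eigenequation $T_ag=c_2 g$ with $c_2\in\R$.

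The main obstacle in this plan is the boundary case analysis at $b\in\{0,1\}$. The conjugation trick reduces any putative dependence among $\{g, M_1g, M_bT_ag\}$ to a dependence among at most four pure modulations of $g$, a situation cleanly controlled by Theorem~\ref{hrt-colinear} when the frequencies are distinct; but at the coincidence points the relation degenerates and one must invoke a separate periodicity argument to deduce $g\equiv 0$ from a translation eigenequation. The real-valuedness of $g$ is used precisely to justify the conjugation step, and the restriction $b\in\Z$ excludes the more delicate value $b=1/2$ at which a different pairing of the frequencies would occur and require a different analysis.
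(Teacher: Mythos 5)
Your proof is correct, but it takes a genuinely different route from the paper's. The paper first establishes the symmetry $F(a,b)=F(a,1-b)$ (Lemma~\ref{Sym:F}), deduces that a dependence at $(a_0,b_0)$ forces a second dependence at $(a_0,1-b_0)$ (or at $(-a_0,1/2)$ when $b_0=1/2$), and then invokes the Demeter--Zaharescu theorem on $(2,2)$ configurations \cite{DemZah12} to reach a contradiction; this is exactly the ``restriction principle'' the proposition is meant to illustrate. You instead combine the conjugation identity $\overline{M_bT_ag}=M_{-b}T_ag$ (valid since $g$ is real) with the trivial identity $M_{-2b}(M_bT_ag)=M_{-b}T_ag$ to cancel the translation entirely, reducing any putative dependence to one among the four pure modulations at frequencies $\{-2b,\,1-2b,\,0,\,-1\}$; distinctness of these frequencies for $b\in\Z\setminus\{0,1\}$ kills the generic case, and the degenerate cases $b\in\{0,1\}$ collapse to a translation eigenequation $T_ag=cg$, impossible for $0\neq g\in L^2(\R)$ by periodicity of $|g|$. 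Your argument is more elementary and self-contained: it needs only linear independence of finitely many distinct modulations (Theorem~\ref{hrt-colinear} applied to $\hat g$, since $\widehat{M_\beta g}=T_\beta\hat g$ and the Fourier transform is unitary --- worth one explicit line), and it avoids the deep $(2,2)$ input altogether; it even covers all real $(a,b)$ with $b\neq 1/2$, not just lattice points. What it buys less of is reach at the symmetry axis: the paper's route handles $b=1/2$ (where your four frequencies pair off and yield only $c_1=\bar c_2$), which is immaterial for the stated proposition but is why the paper's method extends to $\Z\times\R$. One small presentational point: the proposition is phrased in terms of the extension function $F$, so you should state explicitly (as you do implicitly via Corollary~\ref{version-psd}) that $F(a,b)=1$ iff the Gramian~\eqref{gram3} is singular iff $\{g,M_1g,M_bT_ag\}$ is dependent, so that ruling out the dependence is literally the claim about global maximizers.
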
 

The proof of  Proposition~\ref{L2-3}  is based Demeter's result on $(2,2)$ configurations, as well as on a   symmetry property of $F$. It also illustrates the restriction principle that will be introduced in Section~\ref{subsec3.3}.  First we prove the following symmetry of  $F$. 

\begin{lemma}\label{Sym:F} Suppose that $g\in L^2(\R)$ is real-valued. With the setting above we have   
$$F(a, b)=F(a, 1-b)$$ for all $(a, b)\in \Z \times \R$, $b\neq 1/2$, and $$F(-a, 1/2)=F(a,1/2)$$ for all $a \in \Z$. 
\end{lemma}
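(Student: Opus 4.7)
The plan is to exploit the conjugation symmetries of the STFT that become available when $g$ is real-valued. Starting from $V_gg(x,y)=\int g(t)g(t-x)e^{-2\pi i y t}\,dt$, a direct computation yields the two identities
\begin{equation*}
V_gg(x,-y)=\overline{V_gg(x,y)}, \qquad V_gg(-x,-y)=e^{-2\pi i xy}\,\overline{V_gg(x,y)},
\end{equation*}
the second obtained via the substitution $s=t+x$. These are the only analytic ingredients needed; the rest is linear algebra on the $2\times 2$ block structure of the Gramian.

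For the first identity, applying $V_gg(x,-y)=\overline{V_gg(x,y)}$ entrywise gives $V_gg(a,1-b)=\overline{V_gg(a,b-1)}$ and $V_gg(a,-b)=\overline{V_gg(a,b)}$, so that
\begin{equation*}
u(a,1-b)=J\,\overline{u(a,b)}, \qquad J=\begin{bmatrix}0&1\\1&0\end{bmatrix}.
\end{equation*}
Specializing the same identity at $x=0$ shows $\bar\alpha=V_gg(0,-1)$, from which a quick block multiplication gives $JAJ=\overline{A}$, and therefore $JA^{-1}J=\overline{A^{-1}}$. Substituting this into the definition of $F$ and using $J^T=J$,
\begin{equation*}
F(a,1-b)=\ip{A^{-1}J\overline{u(a,b)}}{J\overline{u(a,b)}}=\overline{\ip{A^{-1}u(a,b)}{u(a,b)}}=\overline{F(a,b)}=F(a,b),
\end{equation*}
the last equality being that $F$ is real because $A^{-1}$ is Hermitian.

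For the second identity, I would specialize $V_gg(-x,-y)=e^{-2\pi i xy}\overline{V_gg(x,y)}$ to $x=a\in\Z$, $y=1/2$, which produces the phase factor $e^{-\pi i a}=(-1)^a$. Combining this with the conjugation identity of the first step forces both entries of $u(-a,1/2)$ to equal $(-1)^a$ times the corresponding entries of $u(a,1/2)$; that is, $u(-a,1/2)=(-1)^a\,u(a,1/2)$. Since $F$ depends only on $u$ up to a unimodular scalar, $F(-a,1/2)=F(a,1/2)$ follows immediately.

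There is no substantive obstacle; the argument is essentially bookkeeping of the reflection/conjugation symmetries of the STFT of a real function. The one conceptual point worth highlighting is the block-structural identity $JAJ=\overline{A}$, which is the precise algebraic reason the quadratic form $\ip{A^{-1}\cdot}{\cdot}$ is invariant under the replacement $u\mapsto J\bar u$, and hence the reason these symmetries of $V_gg$ descend to symmetries of $F$.
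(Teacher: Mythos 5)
Your proof is correct and follows essentially the same route as the paper's: the same conjugation symmetries of $V_gg$ for real $g$, the same swap matrix ($J$ in your notation, $V$ in the paper's) with the identity $JA^{-1}J=\overline{A^{-1}}$, and the same conclusion via realness of the Hermitian form. Your handling of the second identity is a touch cleaner — you observe directly that $u(-a,1/2)=(-1)^a u(a,1/2)$ for $a\in\Z$, whereas the paper keeps the diagonal phase matrix $B=\mathrm{diag}(e^{\pi i a},e^{-\pi i a})$ and verifies $B^*A^{-1}B=A^{-1}$ — but the substance is identical.
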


\begin{proof}
Let $$V=\begin{bmatrix}0& \ 1\\ 1&0\end{bmatrix}.$$  Then $F(a, 1-b)=\ip{A^{-1}u(a, 1-b)}{u(a, 1-b)},$ and 
$$
u(a, 1-b)=\begin{bmatrix}V_{g}g(a,1-b)\\V_gg(a, b)\end{bmatrix}=\overline{V \begin{bmatrix}V_{g}g(a,b)\\V_gg(a, b-1)\end{bmatrix}}=V\overline{u(a,b)}.$$ 
Moreover, straightforward computations show that $V^{T}A^{-1}V=VA^{-1}V=\bar{A}^{-1}.$ 

Consequently,
\begin{align*}
F(a, 1-b)&=\ip{A^{-1}u(a, 1-b)}{u(a, 1-b)}=\ip{A^{-1}V\overline{u(a,b)}}{V\overline{u(a,b)}}\\
&=\ip{V^TA^{-1}V\overline{u(a,b)}}{\overline{u(a,b)}}=\ip{\bar{A}^{-1}\overline{u(a,b)}}{\overline{u(a,b)}}\\
&=\overline{\ip{\bar{A}^{-1}\overline{u(a,b)}}{\overline{u(a,b)}}}=\ip{A^{-1}u(a,b)}{u(a,b)}\\
&=F(a, b)
\end{align*} where we have used the fact that $A^{-1}$ is a positive definite matrix.

When $b=1/2$ and $a\in \Z$, we see that $F(-a, 1/2)=\ip{A^{-1}u(-a, 1/2)}{u(-a, 1/2)}.$ but $$u(-a, 1/2)=\begin{bmatrix}V_gg(-a, 1/2)\\ V_gg(-a, -1/2)\end{bmatrix}=\begin{bmatrix}e^{\pi i a}V_gg(a, 1/2)\\ e^{-\pi i a}V_{g}g(a, -1/2)\end{bmatrix}=Bu(a,b)$$ where $B=\begin{bmatrix}e^{\pi i a} &0\\0 &e^{-\pi i a}\end{bmatrix}$. It follows that
\begin{align*}
F(-a,1/2)&=\ip{A^{-1}u(-a, 1/2)}{u(-a, 1/2)}=\ip{A^{-1}Bu(a, 1/2)}{Bu(a, 1/2)}\\
&=\ip{B^*A^{-1}Bu(a,1/2)}{u(a,1/2)}=\ip{A^{-1}u(a,1/2)}{u(a,1/2)}\\
&=F(a,1/2)
\end{align*} where we used the fact that  $B^*A^{-1}B=A^{-1}$. To see why this is the case we observe that $a\in \Z$ and a series of computations shows that
$$B^*A^{-1}B=\tfrac{1}{1-|\alpha|^2}\begin{bmatrix}1 & -\alpha e^{-2\pi i a}\\-\overline{\alpha}e^{2\pi i a}& 1\end{bmatrix}= \tfrac{1}{1-|\alpha|^2}\begin{bmatrix}1 & -\alpha \\-\overline{\alpha}& 1\end{bmatrix}=A^{-1}.$$

\end{proof}

The following argument gives an alternate  proof of Proposition~\ref{L2-3}.

\begin{proof}
Because $\{g, M_1g, g\}$ and $\{g, M_1g, M_1g\}$ are both linearly dependent (repeated vectors) it follows that  $F(0,0)=F(0,1)=1$. Assume that there exists $(a_0, b_0) \in \Z \times \R \setminus  \{(0,0), (0,1)\}$ such that $F(a_0, b_0)=1$ with $b_0\neq 1/2$.  In particular, by Corollary~\ref{version-psd} the system $\mathcal{G}(g, \{(0,0), (0,1), (a_0, b_0)\})$ is linearly dependent, i.e., $M_{b_{0}}T_{a_{0}}g$ belongs to the linear span of $\mathcal{G}(g, \{(0,0), (0,1)\})$. Using Lemma~\ref{Sym:F} we get that $F(a_0, 1-b_0)=1$ and therefore $M_{1-b_{0}}T_{a_{0}}g$ belongs to the linear span of $\mathcal{G}(g, \{(0,0), (0,1)\})$. Therefore,  $\mathcal{G}(g, \{(0,0), (0,1), (a_0, b_0), (a_0, 1-b_0)\})$ is also linearly dependent. But $\{(0,0), (0,1), (a_0, b_0), (a_0, 1-b_0)\}$ is a $(2,2)$ configuration and $g \in L^2(\R)$, which contradicts \cite[Theorem 1.4]{DemZah12}. 

The last case to consider is to assume that for some $a_0\neq 0$, $b_0=1/2$ and  $F(a_0, 1/2)=1$. But then using Lemma~\ref{Sym:F} again we see that $F(-a_0, 1/2)=F(a_0,1/2)$ and therefore $\mathcal{G}(g, \{(0,0), (0,1), (a_0, 1/2), (-a_0, 1/2)\})$ is linearly dependent. But $\{(0,0), (0,1), (a_0, 1/2), (-a_0, 1/2)\}$ is also a $(2,2)$ configuration and $g \in L^2(\R)$, which contradicts \cite[Theorem 1.4]{DemZah12}. 
\end{proof}

\section{Extension and restriction  principles to the HRT conjecture}
In this section we describe in its full generality the aforementioned extension principle for the HRT conjecture. It could also be viewed as an inductive approach to attack the conjecture. More specifically, suppose that the Conjecture holds for a given function $g \in L^{2}(\R)$ and a given set $\Lambda=\{(a_k, b_k)\}_{k=1}^{N} \subset \R^2$. We seek all the points $(a, b) \in \R^2 \setminus \Lambda$ such that the conjecture remains true for the same function $g$ and the new set $\Lambda'=\Lambda \cup \{(a,b)\}$.  We investigate this question by using Theorem~\ref{psd-matrix} and Corollary~\ref{version-psd} to relate the Gramians of $\mathcal{G}(g, \Lambda')$ and $\mathcal{G}(g, \Lambda)$. In Section~\ref{subsec3.1} we introduce the main technical tool to extend the HRT in the sense given above. Subsequently, in  Section~\ref{subsec3.2} we apply this approach to  $(1, n)$ configurations. Finally, in Section~\ref{subsec3.3} we introduce a related restriction principle that allows us to establish the HRT conjecture for a family of $4$ points and real-valued functions from knowing that the conjecture can be proved for a related family of symmetric $(2,3)$  configurations. To establish the latter result  we refine  Demeter's ``conjugate trick" arguments to handle this family of $(2,3)$  configurations.

\subsection{The HRT extension principle}\label{subsec3.1} 
Let $g \in L^2(\R)$ with $\|g\|_2=1$. Assume that 
Conjecture~\ref{hrt} holds for some  $\Lambda=\{(a_k, b_k)\}_{k=1}^{N} \subset \R^2$ with $(a_1, b_1)=(0,0)$. Let $\Lambda'=\{(a_k, b_k)\}_{k=1}^{N} \cup \{(a, b)\}$ for   $(a, b) \in \R^2$. 

The Gramian $G_{g, N+1}(a,b) $ of $\mathcal{G}(g, \Lambda')=\{e^{2\pi i b_k \cdot}g(\cdot - a_k)\}_{k=1}^{N} \cup\{e^{2\pi i b \cdot}g(\cdot -a)\}$ has  the following block structure:

\begin{equation}\label{gram-induction}
G_{N+1}:=G_{g, N+1}(a,b)=\begin{bmatrix} G_{N} & u_N(a,b)\\  u(a,b)^{*} & 1\end{bmatrix}
\end{equation}
 where $G_N:=G_{g, N}$ is the Gramian of $\{e^{2\pi i b_k \cdot}g(\cdot - a_k)\}_{k=1}^{N}$, $u_N(a,b)$ is a vector in $\C^{N}$ given by 
\begin{equation}\label{vector-induction}
u_N(a,b)=\begin{bmatrix}e^{2\pi i a_1b_1}V_{g}g(a, b)\\ e^{-2\pi i  a_2 (b-b_2)}V_gg(a-a_2, b-b_2)\\ e^{-2\pi i a_3(b- b_3)}V_gg(a-a_3, b-b_3)\\ \vdots \\ e^{-2\pi i  a_{N}(b-b_N)}V_gg(a-a_{N}, b-b_N)\end{bmatrix}=\begin{bmatrix} V_{g}g(a, b)\\ e^{2\pi i  a_2b_2}V_g(T_{a_{2}}M_{b_{2}}g)(a, b)\\ e^{2\pi i a_3b_3}V_g(T_{a_{3}}M_{b_{3}}g)(a, b)\\ \vdots \\ e^{2\pi i  a_{N}b_N}V_g(T_{a_{N}}M_{b_{N}}g)(a, b)\end{bmatrix}
\end{equation} and 
$ u_N(a,b)^{*}$ is the adjoint of  $u_N(a,b)$.  

Because $G_{N}$ is positive definite,  the function  $F_{N+1}:\R^2 \to [0, \infty]$ given by 
\begin{equation}\label{maintool}
F(a,b):=F_{N+1}(a, b)=\ip{G_{N}^{-1}u_N(a,b)}{u_N(a,b)}
\end{equation}
is well-defined.
For simplicity and when the context is clear, we write $u(a,b)$ for $u_N(a,b)$, and $F(a, b)$ for $F_N(a,b)$.  Note that when $N=2$ the function $F$ is simply the one introduced in the proof of Theorem~\ref{L2-3}.
 The following result summarizes the main properties of $F$.

\begin{thm}\label{keyfunest}
With the above notations assume that $G_{ N}$ is a positive definite $N\times N$ matrix.   Then,  the following statements hold.
\begin{enumerate}
\item[(i)] $0\leq F(a,b)\leq 1$ for all $(a, b) \in \R^2$, and moreover, $F(a_k, b_k)=1$ for each $k=1, \hdots, N$. 
\item[(ii)] $F$ is uniformly continuous and $\lim_{|(a,b)|\to \infty}F(a,b)=0$.
\item[(iii)] $\iint_{\R^{2}}F(a,b)da db=N$.
\item[(iv)] The Fourier transform  $\widehat{F}:\R^2 \to \C$  of $F$ given by $$\widehat{F}(\xi, \eta)=\iint_{\R^{2}}F(a,b)e^{-2\pi i (a\xi+ b\eta)}dadb,$$ is strictly positive definite, and integrable. 
\item[(v)] $\det{G_{g}(a,b)}=(1-F(a,b))\det{G_{N}}$.
\end{enumerate}
\end{thm}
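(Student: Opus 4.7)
Parts (i) and (v) should fall out directly from Corollary~\ref{version-psd} applied to the $(N+1)\times(N+1)$ Gramian $G_{N+1}(a,b)$. Since $G_N$ is positive definite, so is $G_N^{-1}$, which gives $F(a,b) \geq 0$. As a Gramian, $G_{N+1}(a,b)$ is automatically positive semidefinite, so Corollary~\ref{version-psd}(1) yields $F(a,b) \leq 1$. If $(a,b) = (a_k,b_k)$ for some $k$, the enlarged Gabor system contains a repeated vector and is therefore linearly dependent, so $G_{N+1}(a_k,b_k)$ is singular and the same corollary forces $F(a_k,b_k) = 1$. Statement (v) is simply the determinant identity of Corollary~\ref{version-psd}(2).

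For (ii) I would exploit the Hermitian quadratic form structure $F(a,b) = \langle G_N^{-1}u_N(a,b),u_N(a,b)\rangle$. Each component of $u_N(a,b)$ equals a unimodular constant times $V_g(T_{a_k}M_{b_k}g)(a,b)$, and each such STFT is uniformly continuous on $\R^2$ and vanishes at infinity (standard facts recalled in Section~\ref{subsec2.1}); since $|F(a,b)| \leq \|G_N^{-1}\|_{op}\,\|u_N(a,b)\|_{\C^N}^2$, both properties pass to $F$. For (iii), expand
\[
F(a,b) = \sum_{k,\ell=1}^{N}(G_N^{-1})_{k,\ell}\, u_\ell(a,b)\,\overline{u_k(a,b)}
\]
and integrate term by term. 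Combining the phases in~\eqref{vector-induction} with the identity $T_aM_b = e^{-2\pi i ab}M_bT_a$ and the STFT orthogonality~\eqref{ortho-Vgg} (using $\|g\|_2 = 1$), a direct computation gives $\iint_{\R^2} u_\ell(a,b)\overline{u_k(a,b)}\, da\, db = (G_N)_{\ell,k}$. Therefore $\iint F\, da\, db = \operatorname{tr}(G_N^{-1}G_N) = N$.

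Part (iv) has two pieces. Since $F \geq 0$ and $F \in L^1(\R^2)$ by (i) and (iii), Proposition~\ref{posdef} applied to the finite nonnegative Borel measure $d\mu = F(a,b)\, da\, db$ shows that $\widehat{F} = \widehat{\mu}$ is positive definite. For strict positive definiteness, continuity of $F$ together with $F(0,0) = 1$ implies that $\{F > 0\}$ contains a nonempty open neighborhood of the origin; a nonzero trigonometric polynomial on $\R^2$ is real-analytic and hence cannot vanish on such a set, so no nonzero trig polynomial vanishes on $\operatorname{supp}\mu$. Integrability of $\widehat{F}$ is the substantive step: I would invoke formula~\eqref{FT-prod-stft} to express each $\mathcal{F}_2(u_\ell\,\overline{u_k})$, up to unimodular phases, as the pointwise product of two STFTs, each lying in $L^2(\R^2)$; Cauchy--Schwarz then puts every such Fourier transform in $L^1(\R^2)$, and summing the finitely many terms yields $\widehat{F} \in L^1(\R^2)$. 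This last computation, which converts the quadratic-form structure of $F$ into an explicit product of STFTs via~\eqref{FT-prod-stft}, is the only step requiring genuine care; the rest of the proof is bookkeeping with the matrix identities and STFT covariance relations already in hand.
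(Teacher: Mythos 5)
Your proposal is correct and follows essentially the same route as the paper: Corollary~\ref{version-psd} for (i) and (v), the componentwise properties of the STFT for (ii), the trace identity $\operatorname{tr}(G_N^{-1}G_N)=N$ via the orthogonality relation~\eqref{ortho-Vgg} for (iii), and formula~\eqref{FT-prod-stft} plus Cauchy--Schwarz for the integrability of $\widehat{F}$ in (iv). Your argument for strict positive definiteness (continuity of $F$, $F(0,0)=1$, and the fact that a nonzero trigonometric polynomial cannot vanish on an open set) correctly fills in a step the paper leaves implicit.
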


\begin{proof}
\noindent {\bf (i)} The Gramian of $G_{g}(a,b)$ is positive semidefinite so by Corollary~\ref{version-psd} and the assumption that $G_{N}$ is positive definite, we conclude that $$0\leq F(a,b)\leq 1.$$ Moreover, for $(a,b)=(a_k, b_k)$ we know that the Gramian is positive semidefinite as the system is linearly dependent (one element is repeated twice). Thus, we get the moreover part of the result. 

\noindent {\bf (ii)} This follows easily as each coordinate of $u(a,b)$ is a uniformly continuous function that tends to $0$ at infinity.

\noindent {\bf (iii)} Suppose that $G_{g,N}^{-1}=(B_{i,j})_{i, j=1}^N$.  We can now write 
\begin{align*}
F(a,b)&=\ip{G_{N}^{-1}u(a,b)}{u(a,b)}\\
&= \sum_{k=1}^{N}\sum_{\ell=1}^{N}B_{k, \ell}(u(a,b))_{\ell}\, \overline{(u(a,b))_{k}}\\
&= \sum_{k=1}^{N}\sum_{\ell=1}^{N}B_{k, \ell} \, e^{2\pi i (a_\ell b_\ell-a_k b_k)}\, V_{g}(T_{a_{\ell}}M_{b_{\ell}}g)(a,b) \, \overline{ V_{g}(T_{a_{k}}M_{b_{k}}g)(a,b)  }.
\end{align*}
Integrating this last formula over $\R^2$ and using the orthogonality and covariance properties of the STFT, i.e.,~\eqref{ortho-Vgg} and~\eqref{stft-TF} we have 
$$
\iint_{\R^{2}}F(a,b) da\, db = \sum_{k, \ell=1}^N B_{k, \ell} \, e^{2\pi i (a_\ell b_\ell-a_k b_k)}\,   \iint_{\R^{2}} V_{g}(T_{a_{\ell}}M_{b_{\ell}}g)(a,b)\,  \overline{ V_{g}(T_{a_{k}}M_{b_{k}}g)(a,b)  } \, da\, db
$$
Evaluating the integral leads to
\begin{align*}
\iint_{\R^{2}} V_{g}(T_{a_{\ell}}M_{b_{\ell}}g)(a,b)\,  \overline{ V_{g}(T_{a_{k}}M_{b_{k}}g)(a,b)  } \, da\, db &=  \ip{V_{g}(T_{a_{\ell}}M_{b_{\ell}}g)}{V_{g}(T_{a_{k}}M_{b_{k}}g)}\\ \notag
&=\ip{T_{a_{\ell}}M_{b_{\ell}}g}{T_{a_{k}}M_{b_{k}}g}\, \ip{g}{g}\\ \notag
&= \ip{T_{a_{\ell}}M_{b_{\ell}}g}{T_{a_{k}}M_{b_{k}}g}
\end{align*}
Consequently,

\begin{align*}
\iint_{\R^{2}}F(a,b) da\, db &=   \sum_{k, \ell=1}^N B_{k, \ell} \, e^{2\pi i (a_\ell b_\ell-a_k b_k)}\, \ip{T_{a_{\ell}}M_{b_{\ell}}g}{T_{a_{k}}M_{b_{k}}g}\\ \notag
&=   \sum_{k, \ell=1}^N B_{k, \ell}\,  \ip{M_{b_{\ell}}T_{a_{\ell}}g}{M_{b_{k}}T_{a_{k}}g}\\ \notag
&=  \sum_{k, \ell=1}^N B_{k, \ell}\, (G_{N})_{\ell,k}\\ \notag
&=\tfrac{1}{{\textrm det} G_{N}}  \sum_{ \ell=1}^N \sum_{k=1}^N (-1)^{k+\ell} \, (G_{N})_{\ell,k}\, {\textrm det} G_{N}(\{\ell\}',\{k\}') \\ \notag
& = \tfrac{1}{\det {G_{N}}}  \sum_{ \ell=1}^N \det{G_{N}}\\ \notag
&= N \notag
\end{align*}
where we use the fact that $B_{k,\ell}=\tfrac{(-1)^{k+\ell }}{{\textrm det} G_{N}} \textrm{det} G_{N}(\{k\}',\{\ell\}')$. 

\noindent {\bf (iv)} This part follows from the fact that $F$ is nonnegative, not identically $0$, continuous, and integrable.  

Using~\eqref{FT-prod-stft},~\eqref{stft-TF}, and the notations set in part {\bf (iii)} we can compute $\hat{F}$ explicitly
\begin{align*}
\hat{F}(\xi, \eta) &=  \sum_{k=1}^{N}\sum_{\ell=1}^{N}B_{k, \ell} e^{2\pi i (a_\ell b_\ell-a_k b_k)} 
\mathcal{F}_2(V_{g}(T_{a_{\ell}}M_{b_{\ell}}g)\,  \overline{ V_{g}(T_{a_{k}}M_{b_{k}}g) })(\xi, \eta)\\
&=\sum_{k=1}^{N}\sum_{\ell=1}^{N}B_{k, \ell} \, e^{2\pi i (a_\ell b_\ell-a_k b_k)} 
  V_{T_{a_{k}}M_{b_{k}}g}(T_{a_{\ell}}M_{b_{\ell}}g)(-\eta, \xi) \, \overline{ V_{g}g (-\eta, \xi)}\\
&=\sum_{k=1}^{N}\sum_{\ell=1}^{N}B_{k, \ell} \, e^{2\pi i (a_\ell b_\ell-a_k b_k- a_\ell b_k)}  \, e^{-2\pi i(a_\ell \xi +b_k\eta)}\,  V_{g}g(-\eta -a_\ell +a_k, \xi -b_\ell +b_k)\,  \overline{ V_{g}g (-\eta, \xi)}.
\end{align*}

It is clear that $$\iint_{\R^2}|\hat{F}(\xi, \eta)|d\xi d\eta \leq \sum_{k=1}^{N}\sum_{\ell=1}^{N}|B_{k, \ell}| < \infty.$$

\noindent {\bf (v)} Follows from Corollary~\ref{version-psd}.

%
%

\end{proof}

The following result is a consequence of Theorem~\ref{keyfunest}.

\begin{cor} \label{Fandhrt} Let $g \in L^2(\R)$ with $\|g\|_2=1$ and $\Lambda=\{(a_k, b_k)\}_{k=1}^{N} \subset \R^2$. Assume that $\mathcal{G}(g, \Lambda)$ is linearly independent. Let  $\Lambda'=\{(a_k, b_k)\}_{k=1}^{N} \cup \{(a, b)\}$. Then $\mathcal{G}(g, \Lambda')$ is linearly independent if and only if $F(a,b)<1$.  Furthermore,  there exists $R:=R(\Lambda, g)>0$ such that for all $(a, b) \in \R^2$ with $|(a, b)|> R$, then $\mathcal{G}(g, \Lambda')$ is linearly independent where $\Lambda'=\Lambda \cup \{(a, b)\}$
\end{cor}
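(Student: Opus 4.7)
The plan is to reduce both claims directly to the properties of $F$ already collected in Theorem~\ref{keyfunest} and to the characterization of positive definiteness in Corollary~\ref{version-psd}. The key observation is that $\mathcal{G}(g,\Lambda')$ is linearly independent if and only if its Gramian $G_{N+1}$ is strictly positive definite (equivalently, nonsingular), since $G_{N+1}$ is automatically positive semidefinite as a Gramian.

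For the equivalence in the first statement, I would first note that the hypothesis that $\mathcal{G}(g,\Lambda)$ is linearly independent guarantees that its Gramian $G_N$ is strictly positive definite. Writing $G_{N+1}$ in the block form~\eqref{gram-induction} with $G_N$ in the upper-left corner and the scalar $1=\|M_bT_ag\|_2^2$ in the lower-right corner, I would then invoke Corollary~\ref{version-psd}(1) applied to $A=G_N$, $u=u_N(a,b)$, and $C=1$. This corollary says that $G_{N+1}\succ 0$ precisely when $\langle G_N^{-1}u_N(a,b), u_N(a,b)\rangle<1$, which by the definition~\eqref{maintool} of $F$ is exactly the condition $F(a,b)<1$. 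Combined with the preliminary observation, this establishes the biconditional.

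For the second statement, I would simply combine the first statement with Theorem~\ref{keyfunest}(ii), which gives $\lim_{|(a,b)|\to\infty}F(a,b)=0$. Choose $R=R(\Lambda,g)>0$ so that $|F(a,b)|<1/2$ whenever $|(a,b)|>R$; then $F(a,b)<1$ on this region and the first part yields that $\mathcal{G}(g,\Lambda')$ is linearly independent.

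There is no real obstacle here: both parts are immediate consequences of the technical work already carried out in Theorem~\ref{keyfunest} and Corollary~\ref{version-psd}. The only point requiring a tiny bit of care is to observe at the outset that the Gramian of $\mathcal{G}(g,\Lambda)$ is indeed strictly positive definite (not merely semidefinite) under the linear independence hypothesis, so that $G_N^{-1}$ appearing in the definition of $F$ makes sense and Corollary~\ref{version-psd} applies in its sharp form.
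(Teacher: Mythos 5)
Your proof is correct and follows essentially the same route as the paper: the equivalence comes from applying Corollary~\ref{version-psd}(1) to the block decomposition of the Gramian $G_{N+1}$ (which is what Theorem~\ref{keyfunest}(i) and (v) encode), and the existence of $R$ comes from the decay of $F$ at infinity in Theorem~\ref{keyfunest}(ii). Your added remark that linear independence of $\mathcal{G}(g,\Lambda)$ is needed to ensure $G_N\succ 0$ so that $G_N^{-1}$ and hence $F$ are well defined is a worthwhile clarification, but it does not change the argument.
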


\begin{proof}
The first part follows from   part (1) of  Corollary~\ref{version-psd} and part (i) of Theorem~\ref{keyfunest}. The existence of $R$ is guaranteed by part (ii) of Theorem~\ref{keyfunest}. 
\end{proof}

\begin{rem}\label{hrt-is-local}

\noindent {\bf (a)} By the last part of Corollary~\ref{Fandhrt}, the extension function $F$ makes the HRT conjecture a ``local problem''. In other words, once the conjecture is known to be true for a function $g$ and a set $\Lambda=\{(a_k, b_k)\}_{k=1}^N$, it is also automatically true  for $\Lambda'=\Lambda\cup \{(a, b)\}$ whenever the new point lies outside a ball of radius $R$. So to establish the HRT everywhere for $\Lambda'$ we must focus on the ``local'' properties of $F$, that is the restriction of $F$ to the aforementioned ball.

\noindent {\bf (b)}  Corollary~\ref{Fandhrt} makes it possible to explore the HRT from a numerical point of view. Indeed, Theorem~\ref{keyfunest} and 
Corollary~\ref{Fandhrt}  assert that the HRT for $\Lambda'=\{(a_k, b_k)\}_{k=1}^{N} \cup \{(a, b)\}$ holds if and only if $(a, b)$ is not a global maximizer of $F$. So in theory, one only needs to prove that the set of global maximizers of $F$ is  $\Lambda$. For  a smooth function $g$, differential calculus can be used to check this.  For example, for the  Gaussian  $g(x)=2^{1/4} e^{-\pi x^2}$, using \cite[Lemma 1.5.2]{Groc2001} we get that $$V_gg(a,b)=e^{-\pi i a b}e^{-\pi a^2/2}e^{-\pi b^2/2}.$$ 
In this case and using $\Lambda=\{(0,0), (0,1)\}\cup \{(a, b)\}$,  $F$ is simply
$$F(a,b)= \tfrac{e^{-\pi (a^2+b^2)}}{1-e^{-\pi}}[ 1 + e^{\pi (2b-1)} - 2 e^{\pi(b-1)}\cos a ].$$ One can then used multivariable calculus to show that the global maximizers of $F$  are exactly the two points $(0,0)$ and $(0,1)$, see Figure~\ref{Fig:fig3}.

\begin{figure}[ht]
\begin{center}
\includegraphics[scale=0.5]{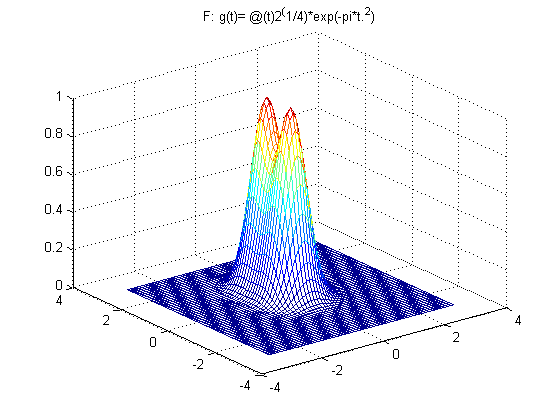}
\end{center}
\caption{Graph of $F$ for $\Lambda=\{(0,0), (0,1)\}$ and $g(x)=2^{1/4}e^{-\pi x^2}$.}
\label{Fig:fig3}
\end{figure}

More generally,  one can numerically analyze the function $F$ to determine its global maximizers. For example, suppose $\Lambda=\{(0,0), (1,0), (0,1)\}$. For the Gaussian $g(x)=2^{1/4}e^{-\pi x^2}$, Figure~\ref{Fig:fig4} displays the graph of the function $F$ on the square $[-4, 4]\times [-4,4]$. This graph illustrates the validity of the HRT in this case, by showing that the global maximum value of $F$ is only achieved on the set $\Lambda$.  Similarly, when $g(x)=e^{-|x|}$,  Figure~\ref{Fig:fig5} displays the graph of the function $F$ on the square $[-4, 4]\times [-4,4]$. This graph illustrates the validity of the HRT in this case, by showing that the global maximum value of $F$ is only achieved on the set $\Lambda$. Recall that the HRT is known to be true for this function and any set of $4$ points \cite{BeBo13}. Finally, Figure~\ref{Fig:fig6} displays the graph of the function $F$ on the square $[-4, 4]\times [-4,4]$ when $g(x)=\tfrac{2^{-1/2}}{1+|x|}$. To the best of our knowledge, the HRT has not been proved for this function and any set of $4$ points. Therefore, Figure~\ref{Fig:fig6} offers some numerical evidence to the validity of the conjecture in this case. We also refer to Corollary~\ref{4points-hrt} for some new results in this setting.

\begin{figure}[ht]
\begin{center}
\includegraphics[scale=0.5]{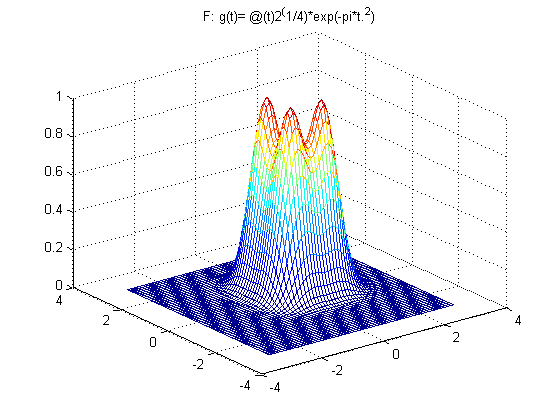}
\end{center}
\caption{Graph of $F$ for $\Lambda=\{(0,0), (0,1), (1,0)\}$ and $g(x)=2^{1/4}e^{-\pi x^2}$.}
\label{Fig:fig4}
\end{figure}

\begin{figure}[ht]
\begin{center}
\includegraphics[scale=0.5]{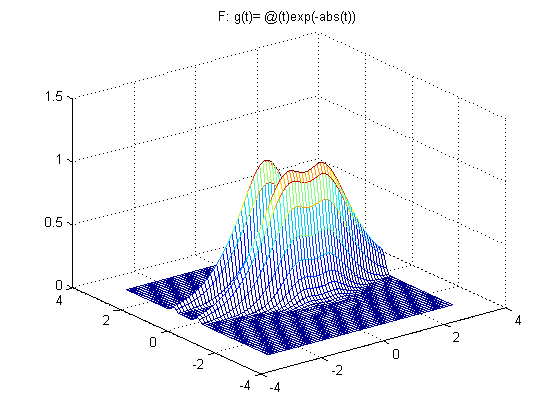}
\end{center}
\caption{Graph of $F$ for $\Lambda=\{(0,0), (0,1), (1,0)\}$ and $g(x)=e^{-|x|}$.}
\label{Fig:fig5}
\end{figure}

\begin{figure}[ht]
\begin{center}
\includegraphics[scale=0.5]{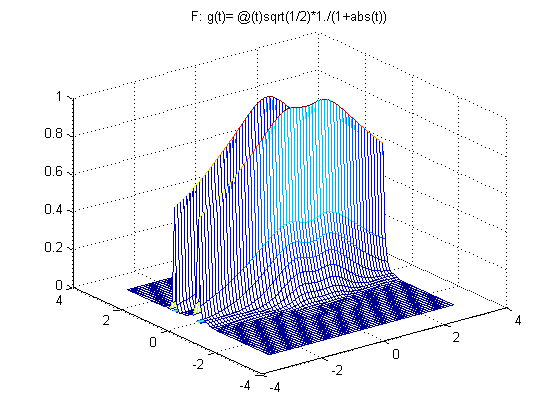}
\end{center}
\caption{Graph of $F$ for $\Lambda=\{(0,0), (0,1), (1,0)\}$ and $g(x)=\tfrac{2^{-1/2}}{1+|x|}$.}
\label{Fig:fig6}
\end{figure}
\end{rem}

\subsection{The HRT conjecture  for $(1, n)$ configurations}\label{subsec3.2}
In this section, we consider the HRT conjecture for $(1,n)$ configurations and prove that  the conjecture can only fail for at most one such configuration. The proof is elementary and based on some dimension arguments. We then focus on the case $n=3$ and show that when the generator is a real-valued function then  Conjecture~\ref{hrt} holds for all $(1,3)$.   Note that the strongest known results for these configurations assume either that the $3$ collinear points are also equi-spaced, or that the generator is in $S(\R)$. The motivation of the results presented in this section is \cite[Theorem 1.3]{wliu16} which states that the HRT conjecture holds for almost all (in the sense of Lebesgue measure) $(1,3)$ configurations. A consequence of our result is that the HRT conjecture can only fail for at most one $(1,3)$ configuration up to equivalence.  For more on the HRT for $(1,3)$ we refer to Demeter's results \cite{Dem10} and a recent improvement due to Liu \cite{wliu16}.

Recall that by using the metaplectic transformations one can show that any $(1,n)$ configuration has the form $\{(0,1)\}\cup \{(a_k, 0)\}_{k=1}^n$ where $a_1=0$ and the rest of the  $a_k$s are distinct and nonzero \cite{HRT96}. Note that the set of metaplectic transformations in $\R^2$ can be identified with the set of $2\times 2$ symplectic matrices, which, in turn is $SL(2, \R)$ \cite{FollHAPS89, Groc2001}. We say that two $(1, n)$ configurations $\Lambda_1$ and $\Lambda_2$ are equivalent if and only if there exists a symplectic matrix $A\in SL(2, \R)$ such that $\Lambda_2=A\Lambda_1$. Let the set of distinct equivalence classes under this relation be denoted by $\Lambda_{(1,n)}$. Without any loss of generality we can assume that 
$$\Lambda_{(1,n)}=\{(0,1)\}\cup\{(a_k, 0)\}_{k=1}^n$$ with $a_1=0$ and $a_k\neq 0$ for all $k=2, 3, \hdots, n.$ To prove that the HRT conjecture holds for all $(1,n)$ configurations, it is enough to restrict to $(1,n)$ configurations in $\Lambda_{(1,n)}$.

\begin{thm}\label{1-nconfig}
Let $n\geq 3$ and $g\in L^2(\R)$ with $\|g\|_2=1$. Suppose that the HRT conjecture holds for $g$ and any $(1, n-1)$ configuration. Then there exists at most one (equivalence class of) $(1, n)$ configuration $\Lambda_0 \in \Lambda_{(1,n)}$ such that $\mathcal{G}(g, \Lambda_0)$ is linearly dependent. Furthermore, suppose that $\Lambda_0=\{(0,1)\}\cup \{(a_k, 0)\}_{k=1}^n \in \Lambda_{(1,n)}$ is a $(1,n)$ configuration such that $\mathcal{G}(g, \Lambda_0)$ is linearly dependent. Let $a\neq  a_k$  for $k=1, \hdots, n$. Fix any $k_0 \in \{1, \hdots, n\}$ and consider $$\Lambda =\{(0,1)\}\cup \{(a_1, 0), (a_2, 0), \hdots, (a_{k_{0}-1}, 0), (a, 0), (a_{k_{0}+1}, 0), \hdots, (a_n, 0)\}.$$ Then $\mathcal{G}(g, \Lambda)$ is linearly independent.
\end{thm}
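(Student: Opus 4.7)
The plan is to exploit the linear independence of translates of $g$ (Theorem~\ref{hrt-colinear}) together with the $(1,n-1)$ HRT hypothesis to pin down any possible dependency in a $(1,n)$ configuration. The central observation is that if such a dependency exists, it expresses $M_1 g$ as a linear combination of translates $\{T_{a_k} g\}$ in which \emph{every} coefficient is non-zero; this rigidity yields both the uniqueness statement and the extension assertion via a single elimination argument.

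First I would establish that in any non-trivial dependency for $\mathcal{G}(g,\Lambda_0)$,
$$c_0 M_1 g + \sum_{k=1}^n c_k T_{a_k} g = 0,$$
all coefficients are non-zero. Indeed, if $c_0 = 0$, the relation reduces to a non-trivial dependency among translates of $g$, contradicting Theorem~\ref{hrt-colinear}. If some $c_{k_0} = 0$ with $k_0\in\{1,\ldots,n\}$, deleting that term leaves a non-trivial dependency for the $(1,n-1)$ configuration $\{(0,1)\}\cup\{(a_k,0) : k\neq k_0\}$, contradicting the hypothesis. The same reasoning applies to any non-trivial dependency of any $(1,n)$ configuration containing the point $(0,1)$.

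For uniqueness, suppose $\Lambda'_0 = \{(0,1)\}\cup\{(a'_k,0)\}_{k=1}^n\in\Lambda_{(1,n)}$ also admits a non-trivial dependency $c'_0 M_1 g + \sum_k c'_k T_{a'_k}g = 0$ with every coefficient non-zero. Forming $c'_0$ times the relation for $\Lambda_0$ minus $c_0$ times the relation for $\Lambda'_0$ eliminates $M_1 g$ and yields a linear relation among the translates $\{T_{a_k}g\}\cup\{T_{a'_k}g\}$. By Theorem~\ref{hrt-colinear} every coefficient in this relation must vanish; but any shift appearing in exactly one of the two sets would contribute a non-zero coefficient with no partner to cancel against. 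Hence $\{a_k\} = \{a'_k\}$ as sets, so $\Lambda_0 = \Lambda'_0$.

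The ``Furthermore'' conclusion follows from exactly the same elimination, now applied to the dependencies of $\Lambda_0$ and $\Lambda$. The coefficient of $T_{a_{k_0}} g$ in the resulting relation equals $\tilde c_0 c_{k_0}\neq 0$ (since $a_{k_0}$ appears only in the relation for $\Lambda_0$), and the coefficient of $T_a g$ equals $-c_0 \tilde c_a\neq 0$ (since $a\neq a_k$ for every $k$, so $a$ appears only in the relation for $\Lambda$); either alone contradicts Theorem~\ref{hrt-colinear}. The only genuinely delicate point throughout is the initial non-zero coefficients claim, which is precisely what prevents spurious cancellations in the elimination; once this is in hand the remaining steps are purely set-theoretic book-keeping.
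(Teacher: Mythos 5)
Your proof is correct and, for the uniqueness claim, is essentially the paper's argument: show that every coefficient in a dependency is forced to be non-zero (a zero coefficient on $M_1g$ contradicts independence of translates, a zero coefficient on a translate drops the relation to a $(1,n-1)$ configuration), then eliminate $M_1g$ between the two relations and contradict the collinear case. For the ``Furthermore'' part you re-run the same elimination, whereas the paper instead invokes the extension function $F$ and a dimension count ($n+1$ linearly independent translates trapped in the $n$-dimensional span of $\{M_1g\}\cup\{T_{a_k}g : k\neq k_0\}$); the two arguments are interchangeable, and your version has the small virtues of being uniform across both claims and of matching shifts as sets rather than through a shared indexing.
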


\begin{proof}  Suppose by contradiction that there exist two distinct  $(1, n)$ configurations (or equivalent classes) $\Lambda_1$ and $\Lambda_2$ such that $\mathcal{G}(g, \Lambda_i)$ is linearly dependent for $i=1,2$.  Further, suppose that $$\Lambda_1=\{(0,1)\}\cup \{(a_k, 0)\}_{k=1}^n \quad {\textrm and }\quad \Lambda_2=\{(0,1)\}\cup \{(b_k, 0)\}_{k=1}^n$$ where $a_1=b_1=0$ and $a_{i_{0}} \neq b_{i_{0}}$ for some $i_{0} \in \{2, \hdots, n\}$. Then, one can write 
\begin{equation}\label{m1g}
M_1g=\sum_{k=1}^nc_kT_{a_{k}}g
\end{equation} where $c_k \neq 0$ for each $k=1, \hdots, n$. Indeed, if $c_{\ell}=0$ for some $\ell \in \{1, \hdots, n\}$ then $\Lambda'_1=\Lambda_1\setminus \{(a_{\ell}, 0)\}$ will be a $(1, n-1)$ configuration and~\eqref{m1g} will become $$M_1g=\sum_{k=1,\,  k \neq \ell}^nc_kT_{a_{k}}g.$$  That is $\mathcal{G}(g, \Lambda_1')$ will be linearly dependent contradicting one of the assumptions of the Theorem. Similarly,  
\begin{equation}\label{m1gbis}
M_1g=\sum_{k=1}^nd_kT_{b_{k}}g
\end{equation} where $d_k \neq 0$ for each $k=1, \hdots, n$. Taking the difference between~\eqref{m1g} and~\eqref{m1gbis} and rearranging leads to $$ (c_1-d_1) g + c_{i_{0}}T_{a_{i_{0}}}g - d_{i_{0}}T_{b_{i_{0}}}g+\sum_{k=2, \, k\neq i_0}^n c_kT_{a_{k}}g - \sum_{k=2, \, k\neq i_0}^n d_kT_{b_{k}}g =0$$ where $i_0$ was chosen above. But since $c_{i_{0}}d_{i_{0}}\neq 0$ and  $a_{i_{0}} \neq b_{i_{0}}$, this last equation is equivalent to the fact that $$\{g, T_{a_{k}}g, T_{b_{k}}g: k=2\, \hdots, n\}$$ is linearly dependent. But this contradicts the fact the HRT conjecture holds for any $0\neq g\in L^{2}(\R)$ and the collinear points $\{(0,0), (a_k, 0), (b_k, 0): k=2, \hdots, n\}$, \cite{HRT96}. Therefore, there can exist at most one (class of equivalence) $(1, n)$ configuration $\Lambda_0$ for which 
$\mathcal{G}(g, \Lambda_0)$ is linearly dependent.

For the last part,  suppose that $\Lambda_0=\{(0,1)\}\cup \{(a_k, 0)\}_{k=0}^n \in \Lambda_{(1,n)}$ is such that $\mathcal{G}(g, \Lambda_0)$ is linearly dependent.  Write   $\Lambda_0= \Lambda_0'\cup  \{(a_{k_{0}}, 0)\}$ where $$\Lambda_0'=\{(0,1), (0,0), (a_2, 0), \hdots, (a_{k_{0}-1}, 0), (a_{k_{0}+1}, 0), \hdots, (a_n, 0)\}.$$  By assumption, $\mathcal{G}(g, \Lambda_0')$ is linearly independent since $\Lambda_0'$ is a $(1, n-1)$ configuration. Then by  Corollary~\ref{Fandhrt}, $F(a_{k_{0}},0)=1$ where $F$ is the function obtained from the Gramian of $\mathcal{G}(g, \Lambda_0)$ according to Theorem~\ref{keyfunest}.

Now, let  $a\not \in \{0, a_{k}: k=2, \hdots, n\}$. If $F(a, 0)=1$ then $T_ag $ must belong to the linear span of $\{M_1g, T_{a_{k}}g: k=1, \hdots, n,\, k\neq k_{0}\}$ whose dimension is $n$. But, $T_{a_{k_{0}}}g $ also belongs to this linear span. Therefore, the $n+1$ functions $g, T_{a_{k}}g, T_ag,$ $k=2, \hdots, n$ belong to an $n$ dimensional space. However, these functions are linearly independent (because the points are collinear). This is a contradiction, from which we conclude that $F(a, 0)<1$, concluding the proof.

\end{proof} 

In the special case where $n=3$ we have the following result.
\begin{cor}\label{gen1-3config}

Let $g\in L^2(\R)$ with $\|g\|_2=1$. There exists at most one (equivalence class of) $(1, 3)$ configuration $\Lambda_0 $ such that $\mathcal{G}(g, \Lambda_0)$ is linearly dependent.
\end{cor}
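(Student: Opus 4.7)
The plan is to obtain Corollary~\ref{gen1-3config} as a direct specialization of Theorem~\ref{1-nconfig} to the case $n=3$. The statement of that theorem requires, as its hypothesis, that the HRT conjecture holds for the function $g$ and every $(1,n-1)$ configuration. With $n=3$ this becomes the requirement that HRT holds for $g$ and every $(1,2)$ configuration, that is, every set of three non-collinear points consisting of two points on one line and a third point on a parallel line.

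The first step is therefore to verify that the hypothesis is automatic. A $(1,2)$ configuration is a set of three distinct points in $\R^2$, and Proposition~\ref{known-results}(iii) states that Conjecture~\ref{hrt} holds for any $g\in L^2(\R)$ whenever $\#\Lambda\le 3$. In particular, for any $(1,2)$ configuration $\Lambda'$ the Gabor system $\mathcal{G}(g,\Lambda')$ is linearly independent, so the hypothesis of Theorem~\ref{1-nconfig} is satisfied for $n=3$ with no additional assumption on $g$ beyond $g\in L^2(\R)$ and $\|g\|_2=1$.

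The second step is simply to quote the conclusion of Theorem~\ref{1-nconfig}: there exists at most one equivalence class of $(1,3)$ configuration $\Lambda_0 \in \Lambda_{(1,3)}$ for which $\mathcal{G}(g,\Lambda_0)$ is linearly dependent. This is exactly the statement of the corollary, so the proof is complete.

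There is no real obstacle here; the content of the corollary is entirely contained in Theorem~\ref{1-nconfig}, and the only thing one must remark is the invocation of Proposition~\ref{known-results}(iii) to supply the $(1,2)$-hypothesis for free. Consequently, the write-up can be reduced to a single short paragraph applying the theorem and citing the $\#\Lambda\le 3$ case.
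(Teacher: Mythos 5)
Your proof is correct and follows exactly the paper's own argument: verify the $(1,2)$-configuration hypothesis of Theorem~\ref{1-nconfig} via the known three-point case of the HRT conjecture, then apply the theorem with $n=3$. Nothing to add.
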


\begin{proof}
When $n=3$ it is known that the HRT conjecture holds for $g$ and every $(1, 2)$ configuration \cite{HRT96}. Thus the assumption of Theorem~\ref{1-nconfig} is satisfied and the corollary follows.
\end{proof}

If we restrict to real-valued functions, then we can prove a stronger result by ruling out the existence of the single ``bad'' (equivalence class of ) $(1,3)$ configuration given by Corollary~\ref{gen1-3config}.

\begin{thm}\label{1-3config}
Let $g\in L^2(\R)$,  $\|g\|_2=1$ be a real-valued function. Let  $a\neq b\neq 0$ and set $\Lambda=\{(0,0), (0,1), (a,0), ( b,0)\}$  be a $(1, 3)$ configuration. Then, Conjecture~\ref{hrt} holds for  $\Lambda$ and $g$.
\end{thm}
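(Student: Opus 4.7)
The strategy is to argue by contradiction, combining Corollary~\ref{gen1-3config} with the real-valuedness of $g$. Suppose $\mathcal{G}(g,\Lambda)$ is linearly dependent, so there exist scalars not all zero with
\[
c_1\,g + c_2\,M_1 g + c_3\,T_a g + c_4\,T_b g = 0. \qquad (*)
\]
Removing any single term from $(*)$ produces a dependence on a $(1,2)$ configuration or on three collinear points, which is impossible by Proposition~\ref{known-results}(iii) and (x). Hence all four coefficients $c_j$ are nonzero.

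Since $g$ is real-valued, conjugating $(*)$ yields
\[
\overline{c_1}\,g + \overline{c_2}\,M_{-1} g + \overline{c_3}\,T_a g + \overline{c_4}\,T_b g = 0, \qquad (**)
\]
exhibiting linear dependence of $\mathcal{G}(g,\Lambda')$ for the $(1,3)$ configuration $\Lambda' = \{(0,0),(0,-1),(a,0),(b,0)\}$. The symplectic map $(x,y)\mapsto(-x,-y)$ sends $\Lambda'$ to the normalized representative $\{(0,0),(0,1),(-a,0),(-b,0)\}\in\Lambda_{(1,3)}$. A short case analysis---based on the fact that a symplectic equivalence $A\in SL(2,\R)$ of $\Lambda$ with this representative must fix the origin and send $(0,1)$ to one of $(0,1)$, $(-a,0)$, or $(-b,0)$---shows that such an equivalence exists if and only if $b=-a$.

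If $b \neq -a$, then $\Lambda$ and the normalized $\Lambda'$ are two distinct elements of $\Lambda_{(1,3)}$ on which $\mathcal{G}(g,\cdot)$ is linearly dependent, directly contradicting Corollary~\ref{gen1-3config}. Otherwise $b=-a$, and I eliminate $g$ between $(*)$ and $(**)$ by forming $\overline{c_1}\cdot(*) - c_1\cdot(**)$ to obtain
\[
\overline{c_1}c_2\,M_1 g - c_1\overline{c_2}\,M_{-1} g + 2i\,\mathrm{Im}(\overline{c_1}c_3)\,T_a g + 2i\,\mathrm{Im}(\overline{c_1}c_4)\,T_{-a} g = 0,
\]
a nontrivial dependence (since $\overline{c_1}c_2 \neq 0$) supported on $\{(0,1),(0,-1),(a,0),(-a,0)\}$. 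These four points form a $(2,2)$ configuration: the pairs $\{(0,1),(a,0)\}$ and $\{(0,-1),(-a,0)\}$ lie on parallel lines of common slope $-1/a$. The Demeter-Zaharescu theorem (Proposition~\ref{known-results}(iv)) then delivers the desired contradiction. The main obstacle is this symmetric case, where Corollary~\ref{gen1-3config} is powerless because $\Lambda$ is already self-symmetric under the conjugation operation, so one must appeal to the deeper $(2,2)$-result through the elimination trick.
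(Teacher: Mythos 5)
Your argument splits into two branches, and the branch that carries almost all of the weight --- the case $b\neq -a$ --- has a genuine gap. Corollary~\ref{gen1-3config} (via Theorem~\ref{1-nconfig}) is proved only for pairs of configurations that are \emph{both} in the normal form $\{(0,1)\}\cup\{(a_k,0)\}$ with the \emph{same} generator $g$: its proof expresses the common element $M_1g$ in two ways as a combination of translates of $g$ and subtracts. Your second configuration $\Lambda'=\{(0,0),(0,-1),(a,0),(b,0)\}$ has off-line point $(0,-1)$, so the corollary does not apply to the pair $(\Lambda,\Lambda')$ as it stands. Normalizing $\Lambda'$ by $-I\in SL(2,\R)$ does produce $\{(0,0),(0,1),(-a,0),(-b,0)\}$, but the metaplectic operator attached to $-I$ is the parity operator: from $(**)$ you only learn that $\mathcal{G}\bigl(g(-\cdot),\{(0,0),(0,1),(-a,0),(-b,0)\}\bigr)$ is linearly dependent, not $\mathcal{G}(g,\cdot)$ for the original $g$. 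Since $g$ is not assumed even, the hypothesis of Corollary~\ref{gen1-3config} (two distinct bad normalized configurations for one and the same $g$) is not met, and the contradiction is not available. Note also that $(**)$ is literally the complex conjugate of $(*)$, so by itself it carries no information beyond $(*)$; new information can only come from a nontrivial linear combination of the two, which is exactly what you do in the $b=-a$ branch but not here.

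The $b=-a$ branch is correct, and it is a legitimate variant of the intended argument: you eliminate the $g$-term between $(*)$ and $(**)$ and land on the $(2,2)$ configuration $\{(0,1),(0,-1),(a,0),(-a,0)\}$, then invoke Demeter--Zaharescu. The paper instead eliminates a \emph{translate}: writing the dependence as $T_bg=c_1g+c_2M_1g+c_3T_ag$ and subtracting its conjugate (using $\overline{T_bg}=T_bg$) gives
\[
(c_1-\overline{c}_1)g+c_2M_1g-\overline{c}_2M_{-1}g+(c_3-\overline{c}_3)T_ag=0,
\]
a nontrivial relation (since $c_2\neq 0$) supported on $\{(0,0),(a,0),(0,1),(0,-1)\}$. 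This is a $(1,3)$ configuration whose three collinear points $(0,-1),(0,0),(0,1)$ are equispaced, so Proposition~\ref{known-results}(xi) applies for \emph{every} $a\neq b$, with no case split and no appeal to Corollary~\ref{gen1-3config} or to the $(2,2)$ theorem. Replacing your first branch by this elimination (equivalently, forming $\overline{c_4}\cdot(*)-c_4\cdot(**)$ to kill $T_bg$ instead of $g$) repairs the proof.
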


\begin{proof}
Assume by way of contradiction that $\mathcal{G}(g, \Lambda)$ is linearly dependent. Then, there exists $c_k \in \C^{*}$, $k=1, 2, 3,$ such that $$c_1g + c_2M_1g + c_3T_ag = T_bg. $$ Because, $g$ is real-valued, we see that $$\overline{T_bg}=T_bg= \bar{c}_1g + \bar{c}_2M_{-1}g + \bar{c}_3T_ag.$$ Hence, $$(c_1-\bar{c}_1)g+ c_2M_1g - \bar{c}_2M_{-1}g +(c_3-\bar{c}_3)T_ag=0.$$ Note that $c_2\neq 0$. Hence, this last equation  is equivalent to the fact  that  $\mathcal{G}(g, \Lambda')$  where $\Lambda'=\{(0,0), (a, 0), (0,1), (0, -1)\}$ is linearly dependent. However, because the points $(0,1), (0,0)$ and $(0, -1)$ are equally spaced, $\Lambda'$ is a $(1, 3)$ configuration, for which Conjecture~\ref{hrt} is known to hold \cite{HRT96}. Therefore, we arrive at a  contradiction.
\end{proof}

\subsection{A restriction principle for the HRT conjecture}\label{subsec3.3}

The goal of this section is to establish Conjecture~\ref{hrt} for a large family of sets of cardinality $4$ (that are not $(1,3)$ nor $(2,2)$ configurations) when $g$ is a real-valued function. In addition, we establish similar results for Conjecture~\ref{hrt-s}. 
In fact, we prove that the general case for (almost) any $4$ points follows from a special family of $(3,2)$ configurations. This is our restriction principle: proving that Conjecture~\ref{hrt} or Conjecture~\ref{hrt-s} hold for this special special family of $(3,2)$ configurations implies its validity for a large family  $4$ points.  The proof of the next result is an extension of Demeter's ``conjugate trick'' arguments \cite[Theorem 1.5 (b)]{Dem10}.

\begin{thm}\label{3-2config}
Let $g\in L^2(\R)$ with $\|g\|_2=1$. Suppose $\Lambda$ is a $(3,2)$ configuration given by $\Lambda=\{(0,0), (0,1), (0,-1), (a, b), (a, -b)\}$ where $b\neq 0$. Then, Conjecture~\ref{hrt} holds for  $\Lambda$ and $g$ whenever any of the following holds
\begin{enumerate}
\item[(i)] $a, b \in \Q$.
\item[(ii)] $a\in \Q$ but $b\not\in \Q$.
\item[(iii)] $a, b \not\in \Q$ but $ab \in \Q$, and $g$ is a real-valued function. 
\end{enumerate}
\end{thm}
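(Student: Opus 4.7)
The plan is to handle the three hypotheses separately, using progressively more delicate arguments as the rationality on $a,b$ weakens.

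\textbf{Case (i).} This is an immediate application of Linnell's theorem. Taking a common denominator $N\in\N$ of $a$ and $b$, we have $\Lambda\subset\frac{1}{N}\Z^{2}=A\Z^{2}$ with $A=\frac{1}{N}I$ a full-rank $2\times 2$ matrix, so Proposition~\ref{known-results}(iii) gives the linear independence of $\mathcal{G}(g,\Lambda)$ for any $g\in L^{2}(\R)$.

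\textbf{Setup for (ii)--(iii).} Assume a nontrivial dependence
\[
c_{1}g+c_{2}M_{1}g+c_{3}M_{-1}g+c_{4}M_{b}T_{a}g+c_{5}M_{-b}T_{a}g=0.\qquad(\ast)
\]
Grouping by translates gives the pointwise identity $P(t)\,g(t)+Q(t)\,g(t-a)=0$ a.e., where $P(t)=c_{1}+c_{2}e^{2\pi it}+c_{3}e^{-2\pi it}$ is $1$-periodic and $Q(t)=c_{4}e^{2\pi ibt}+c_{5}e^{-2\pi ibt}$ is $\tfrac{1}{|b|}$-periodic. The degenerate subcases $P\equiv 0$ or $Q\equiv 0$ each collapse $(\ast)$ to HRT on a collinear sub-configuration (Proposition~\ref{known-results}(x)), so we may assume both $P$ and $Q$ are nontrivial. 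The common strategy is to produce a secondary dependence supported on a proper sub-configuration and conclude via one of the already known HRT theorems — namely the $(2,2)$ result of Demeter--Zaharescu (Proposition~\ref{known-results}(iv)) and the equi-spaced $(3,1)$ result (Proposition~\ref{known-results}(xi)), both of which apply to the two natural $4$-point deletions from $\Lambda$.

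\textbf{Case (ii).} Because $a=p/q\in\Q$ and the Gabor system $\mathcal{G}(g,\Lambda)$ is modulation-covariant, I would apply $M_{b}$ and $M_{-b}$ to $(\ast)$ to produce two further dependencies in which $T_{a}g$ appears un-modulated; eliminating $T_{a}g$ between them yields a dependence among modulations of $g$ alone — a collinear configuration of at most four points, hence a contradiction with Proposition~\ref{known-results}(x). The key place where $b\notin\Q$ is used is to guarantee that the resulting frequency set $\{0,\pm 1,\pm b,\pm b\pm 1,\pm 2b\}$ is in ``general position'', so that matching Fourier coefficients on the circle of period $1$ really does force all $c_{k}=0$. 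Equivalently (and perhaps more cleanly), one applies the extension function $F$ of Theorem~\ref{keyfunest} associated with the four-point sub-configuration $\{(0,0),(0,1),(0,-1),(a,b)\}$ (a $(3,1)$ equi-spaced configuration covered by Proposition~\ref{known-results}(xi)), shows that $F(a,-b)<1$ by expanding $\langle A^{-1}u,u\rangle$ in exponentials and using the irrationality of $b$ to exclude the critical maximum, and invokes Corollary~\ref{Fandhrt}.

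\textbf{Case (iii).} Here the real-valued hypothesis makes Demeter's ``conjugate trick'' directly available. Taking complex conjugates of $(\ast)$ and using $\bar g=g$ gives a second dependence of the \emph{same} Gabor system, namely $\bar c_{1}g+\bar c_{3}M_{1}g+\bar c_{2}M_{-1}g+\bar c_{5}M_{b}T_{a}g+\bar c_{4}M_{-b}T_{a}g=0$. If this is not a scalar multiple of $(\ast)$, the linear combination $\bar c_{1}\cdot(\ast)-c_{1}\cdot(\text{conjugate})$ kills the coefficient of $g$ and produces a nontrivial dependence supported on the $(2,2)$ configuration $\{(0,\pm 1),(a,\pm b)\}$, contradicting Proposition~\ref{known-results}(iv). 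Thus the two dependencies are proportional, which after rescaling forces $c_{1}\in\R$, $c_{3}=\overline{c_{2}}$, and $c_{5}=\overline{c_{4}}$; equivalently $P$ and $Q$ are real-valued trigonometric polynomials $P(t)=c_{1}+2|c_{2}|\cos(2\pi t+\phi)$ and $Q(t)=2|c_{4}|\cos(2\pi bt+\psi)$. Iterating the cocycle $g(t-a)=-\tfrac{P(t)}{Q(t)}g(t)$ exactly $n$ times when $ab=m/n\in\Q$ causes the accumulated frequency shift $nab=m$ to be an integer, re-aligning the iterated numerator and denominator into a common finite-frequency algebra; comparing Fourier coefficients on $\R/\Z$ (where $P$ already lives) and using that $g\in L^{2}(\R)$ then forces all coefficients to vanish.

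\textbf{Main obstacle.} The genuinely hard step is the case-(iii) analysis of the functional equation $P(t)g(t)+Q(t)g(t-a)=0$: showing that the single arithmetic ingredient $ab\in\Q$, together with both $a$ and $b$ irrational, suffices to rule out all $L^{2}$ solutions. The subtlety is that the natural periods of $P$ and $Q$ are separately incommensurable, so a direct Fourier-series argument fails; it is only after $n$-fold iteration of the shift $t\mapsto t+a$ that the condition $ab=m/n\in\Q$ produces a usable commensurability, and one has to propagate this commensurability through the cocycle while keeping control of the $L^{2}$ norm of $g$.
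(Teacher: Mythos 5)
Your case (i) matches the paper (reduce to a lattice and invoke Linnell), and your conjugate-trick step at the start of case (iii) --- forcing $c_1\in\R$, $c_3=\overline{c_2}$, $c_5=\overline{c_4}$ via the $(2,2)$ result so that $P$ and $Q$ become real cosine polynomials --- is exactly the paper's first move there. But the analytic core of cases (ii) and (iii) is missing, and the substitutes you propose do not work.

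In case (ii), applying $M_{\pm b}$ to $(\ast)$ and ``eliminating $T_ag$'' does not yield a dependence among modulations of $g$ alone: the combination $c_4\,M_b(\ast)-c_5\,M_{-b}(\ast)$ cancels only the unmodulated $T_ag$ term and leaves $M_{\pm 2b}T_ag$, so you land on a larger configuration with six points on the line $x=0$ and two on $x=a$, not a collinear one, and iterating only proliferates frequencies on the translate side. Your ``cleaner'' alternative --- showing $F(a,-b)<1$ for the extension function of $\{(0,0),(0,\pm1),(a,b)\}$ ``by expanding in exponentials and using the irrationality of $b$'' --- is circular: by Corollary~\ref{Fandhrt} the inequality $F(a,-b)<1$ is literally equivalent to the statement being proved, and the entries of $u(a,-b)$ are values of $V_gg$, not exponentials, so irrationality of $b$ alone gives no handle on the maximum. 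Likewise in case (iii), ``iterate the cocycle $n$ times so that $nab=m$ re-aligns the frequencies, then compare Fourier coefficients on $\R/\Z$'' has no content: $g$ is an arbitrary $L^2$ function, not periodic, so there are no Fourier coefficients to compare, and after $n$ iterations the $P$-factors carry the $n$ distinct irrational phase shifts $t-ja$ while the $Q$-factors carry frequencies in $b\Z$, so no common finite-frequency algebra appears. What the paper actually does --- and what you would need --- is Demeter's iteration/ergodicity argument: apply a metaplectic dilation so the translate sits at distance $1$ and the trigonometric polynomial with rational frequency becomes genuinely periodic, pass to the pointwise recursion $|P(x)g(x)|=|Q(x)g(x-1)|$ a.e., use Birkhoff's pointwise ergodic theorem to pick $x_0,x_1$ in a positive-measure subset of $\mathrm{supp}(g)\cap[0,1]$ so that the products of $|Q|$ along the two integer orbits telescope into each other, compare the products of the periodic polynomial at the two base points, and conclude that $|g|$ is bounded below along an orbit tending to infinity, contradicting $g(x-n)\to 0$ a.e.; in case (iii) the same scheme is run with the roles of $P$ and $Q$ reversed, the real-valuedness being what puts $P$ and $Q$ in the symmetric form that makes the telescoping possible. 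None of this machinery appears in your write-up, and the ``main obstacle'' you flag at the end is precisely the proof that is missing.
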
 

\begin{proof}
We can  trivially assume that $a\neq 0$. Indeed, if $a=0$ then the points in $\Lambda$ will all lie on the $y$-axis, that is the points will be collinear, and HRT is known to be true in this case \cite{HRT96}. 

\vspace{.1in}

\noindent {\bf (i)} Suppose that $a=\tfrac{p}{q}, b=\tfrac{m}{n} \in \Q$.  In this case, we see that $\Lambda=A\Lambda'$, where $A=\begin{bmatrix}\tfrac{1}{q}&0\\0&\tfrac{1}{n}\end{bmatrix}$ and $\Lambda'=\{(0,0), (0, n), (0, -n), (p, m), (p, -m)\}.$ In particular, $\Lambda$ is a subset of a lattice and the result follows from \cite{Lin99}. 

\vspace{.1in}

\noindent {\bf (ii)} Next assume that $a \in \Q$ and $b\not\in \Q$.  By using a scaling matrix (a metaplectic transform)  we can assume that $\Lambda$ has the following form:
$$\Lambda=\{(0,0), (0,a), (0,-a), (1, b'), (1, -b')\}$$ with $ b'=ba \not\in \Q$ \cite{Heil06}.  To simplify the notations we will assume that $$\Lambda=\{(0,0), (0,a), (0,-a), (1, b), (1, -b)\}$$ with $a \in \Q$ and $ b \not\in \Q$. 

Assume by  way of contradiction that $\mathcal{G}(g, \Lambda)$ is linearly dependent. Then, there exist $c_k \in \C$, $k=1, 2, 3$, and $d_k\in \C$, for $k=1, 2$ such that 
\begin{equation}\label{5lidep}
c_1g + c_2M_ag + c_3M_{-a}g= d_1M_{-b}T_1g + d_2M_{b}T_1g.
\end{equation}
 Observe that $c_k, d_k\neq 0$ for each $k$, since the conjecture is true for all  $(2,2)$ configurations, and $(1,3)$ configurations where the points on the line are equiangular. We may also assume that $c_1\in \R$. 
 
 Consequently, we can write~\eqref{5lidep} as 
 
 \begin{equation}\label{recur5}
 |P(x)g(x)|=|Q(x)g(x-1)|\quad a.\ e.
 \end{equation}
where $P(x)=c_1+c_2e^{2\pi i ax} +c_3e^{-2\pi i ax} $ and $Q(x)=e^{2\pi i (-bx+\theta)}(r_1+r_2e^{2\pi i (2bx+\theta')}$ with $r_1, r_2\in (0, \infty)$ and $\theta, \theta'\in [0, 1)$. (Here we write $d_1=r_1e^{2\pi i \theta}$ and $d_2=r_2e^{2\pi i \theta'}$.) 
 
Furthermore, because $0\neq g \in L^2(\R)$ we have that
\begin{equation}\label{decayinfty}
\lim_{|n|\to \infty \;  n\in \Z}g(x-n)=0 \quad a.\ e.
\end{equation} 
and that  $\textrm{supp} (g)\cap [0,1]$ has a positive measure. Let $S\subset \textrm{supp}{g}\cap [0,1]$ be such that $S$ has positive measure, and such that $S+\Z$ contains no zeros of $P$ and $Q$ (this is possible since the set of such zeros is at most countable). From now on,  we assume that ~\eqref{recur5} and ~\eqref{decayinfty} hold for all $x\in S$. 

Next, by the Birkhoff's pointwise ergodic theorem with $1_{S}$, there exists $x_0\in S$ and $n' \in \N$ such that $x_1=\{-x_0-\tfrac{\theta'}{b}+ \tfrac{n'}{b}\} \in S$. Here and in what follows, we denote a fractional part of $x\in \R$ by $\{x\}$.  Let $m=-x_0-\tfrac{\theta'}{b}+ \tfrac{n'}{b}-x_1=y-x_1$.
 
By iterating~\eqref{recur5},  it follows that for all $N>m$
 \begin{equation}\label{iter-left-right}
 \left\{ \begin{array} {r@{\quad = \quad}l}
 |g(x_0+N)|&|g(x_0-1)|\tfrac{\prod_{n=0}^N|Q(x_0+n)|}{\prod_{n=0}^{N}|P(x_0+n)|}\\
 |g(x_1-N+m)|&|g(x_1-1)|\tfrac{\prod_{n=-N+m+1}^{-1}|P(x_1+n)|}{\prod_{n=-N+m+1}^{-1}|Q(x_1+n)|}
 \end{array}\right.
 \end{equation}
 Next, observe that $$\prod_{n=-N+m}^m|Q(x_1+n)|=\prod_{n=0}^N|Q(x_0+n)|.$$ 
 Consequently,
 \begin{align*}
 \prod_{n=-N+m}^{-1}|Q(x_1+n)|&=\prod_{n=-N+m}^{m}|Q(x_1+n)| \tfrac{1}{\prod_{n=0}^m|Q(x_1+n)|}& =K \prod_{n=-N+m}^{m}|Q(x_1+n)|\\
 &=K\prod_{n=0}^{N}|Q(x_0+n)|,
 \end{align*} where $K=\tfrac{1}{\prod_{n=0}^m|Q(x_1+n)|}$ is a positive finite constant that depends only on $m, x_0, n', b,$ and $ \theta'$.
 
 Now assume that $a=t/s \in \Q$, then $P$ is $s-$periodic. Let $T(x)=\prod_{n=0}^{s-1}|P(x+n)|$, and assume first that $T(x_1)\geq T(x_0)$. Then,  
 $$\prod_{n=-N+m+1}^{-1}|P(x_1+n)|=K' \prod_{n=0}^{N}|P(x_1 -n)|\geq \prod_{n=0}^{N}|P(x_0+n)|$$ for all $N>m$, where $K'=\tfrac{1}{|P(x_1)|\prod_{n=-N}^{-N+m}|P(x_1+n)|}$ is a constant independent of $N$.

  Consequently, for $N>m$, 
 \begin{align*}
  |g(x_1-N+m)|&=|g(x_1 -1)|\tfrac{\prod_{n=-N+m}^{-1}|P(x_1+n)|}{\prod_{n=-N+m}^{-1}|Q(x_1+n)|}\\
  &\geq C |g(x_1 -1)| \tfrac{\prod_{n=0}^N|P(x_0+n)|}{\prod_{n=0}^N|Q(x_0+n)|}\\
  &\geq C |g(x_1-1)||g(x_0-1)| |g(x_0+N)|^{-1}
  \end{align*} where $C$ is a constant that depends only on $x_0, m, r_1, r_2, c_1, c_2,$ and $c_3$. But this last inequality contradicts~\eqref{decayinfty}. 
  
  Now if instead, $T(x_0)\geq T(x_1)$. We will have 
  $$\prod_{n=-N+m}^{-1}|P(x_0+n)| \geq \prod_{n=0}^{N}|P(x_1+n)|$$ for all $N>m$, 
  $$\prod_{n=-N+m}^{-1}|Q(x_0+n)| \simeq \prod_{n=0}^{N}|Q(x_1+n)|,$$ where we used the notation $A\simeq B$ to denote $B/c\leq A\leq cB$ for some constant $c$ that depends only on $x_0, m, r_1,$ and $r_2.$
  
For $N>m$, $$ |g(x_0-N+m)|\geq C |g(x_0-1)| \tfrac{\prod_{n=0}^{N}|P(x_1+n)|}{\prod_{n=0}^{N}|Q(x_1+n)|},$$ and $$|g(x_1+N)|=|g(x_1-1)|  \tfrac{\prod_{n=0}^{N}|Q(x_1+n)|}{\prod_{n=0}^{N}|P(x_1+n)|}.$$ 
Consequently, for $N>m$, 
$$|g(x_0-N+m)|\geq C |g(x_0-1)| |g(x_1-1)||g(x_1+N)|^{-1},$$ where $C$ is a constant that depends only on $x_0, m, r_1, r_2, c_1, c_2,$ and $c_3$. But this last inequality contradicts~\eqref{decayinfty}. 

We conclude that~\eqref{5lidep} cannot hold unless, $c_k=0$ for $k=1, 2,3$ and $d_k=0$ for $k=1,2$.

\vspace{.1in}

\noindent {\bf (iii)} Similar to case (ii), and using a metaplectic transform we can assume that $\Lambda$ is of the form $\Lambda=\{(0,0), (0,a), (0,-a), (1, b), (1, b)\}$ with $a\not\in \Q, b\in \Q$. 

We now proceed as in part {\bf (ii)} and assume that~\eqref{5lidep} holds. Since, $g$ is assumed to be real-valued we see by taking the complex conjugate of~\eqref{5lidep} that 
 
 $$c_1g+\overline{c_2}M_{-a}g +\overline{c_3}M_{a}g-\overline{d_1}M_{-b}T_1g-\overline{d_2}M_{b}T_1g=0.$$ taking the difference between this last equation and~\eqref{5lidep} , we obtain 
 $$(c_2-\overline{c_3})M_ag  + (c_3-\overline{c_2})M_{-a}g +(\overline{d}_2-d_1)M_bT_1g +(\overline{d}_1-d_2)M_{-b}T_1g=0.$$ Now, the points $\{(0,a), (0,-a), (1, b), (1, -b)\}$ form a $(2,2)$ configuration and the HRT conjecture is true in this case. Therefore, 
 $c_3=\bar{c}_2, d_2=\bar{d}_1$. Consequently, we let $c_1=c\in \R, c_2=re^{2\pi i \theta},$ and $ d_1=r'e^{2\pi i \theta'},$ where $r, r'\in (0, \infty)$ and $\theta, \theta' \in [0,1)$. 

Therefore,~\eqref{recur5} holds with $P(x)=c+2r\cos 2\pi (ax +\theta)$ and $Q(x)=2r'\cos 2\pi (bx +\theta').$ In particular, $Q$ is a $s-$periodic function if we let $b=t/s\in \Q$. Reversing the role of the polynomials $P$ and $Q$ in the proof of (ii) establishes the result in this last case.

\end{proof}

\begin{rem} We note that the case $a\not\in \Q$ and $b\in \Q$ is equivalent (by a metaplectic transformation) to $a, b, ab\not\in \Q$. This is the only case we have not been able to address. However, if we assume that $g$ is smoother, then we can handle this case as well, see Theorem~\ref{smooth-3-2config} below. 

\end{rem}

We can now prove the following result for a family of  $4$ points in $\R^2$ and real-valued functions. This illustrates the restriction principle we announced in the introduction. Indeed, to establish the HRT conjecture for the family of sets of four points we use the fact the conjecture was proved for the above family of symmetric $(2,3)$ configurations. More specifically, the following result holds. Note that any set of four distinct points can be transformed into $\{(0,0), (0, 1), (s, 0), (a, b)\}.$

\begin{cor}\label{4points-hrt}
Let $g\in L^2(\R)$, $\|g\|_2=1$  be a real-valued function. Suppose  that $\Lambda= \{(0,0), (0, 1), (s, 0), (a, b)\} \subset \R^2$ be a subset of four distinct points.  Conjecture~\ref{hrt} holds for  $\Lambda$ and $g$, whenever any of the following holds
\begin{enumerate}
\item[(i)] $a, b\in \Q$.
\item[(ii)] $a\in \Q$ but $b\not\in \Q$.
\item[(iii)] $a, b \not\in \Q$ but $ab \in \Q$
\end{enumerate}
\end{cor}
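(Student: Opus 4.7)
The plan is to implement the restriction principle from Section~\ref{subsec3.3}: assuming that $\mathcal{G}(g,\Lambda)$ is linearly dependent, I would use the real-valuedness of $g$ via a conjugate trick to produce a non-trivial linear dependence on the symmetric $(2,3)$ configuration
\[
\Lambda^\sharp = \{(0,0),\,(0,1),\,(0,-1),\,(a,b),\,(a,-b)\},
\]
and then derive a contradiction from Theorem~\ref{3-2config}. The main technical content already lives in that theorem, so the corollary is essentially bookkeeping around it.

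First I would dispose of the degenerate sub-cases. Distinctness of the four points in $\Lambda$ forces $s\neq 0$. If $a=s$, then $\Lambda$ is a $(2,2)$ configuration and Proposition~\ref{known-results}(iv) suffices; if $b=0$ (which can only occur in case (i)), then $\Lambda$ is a $(1,3)$ configuration with the collinear points $\{(0,0),(s,0),(a,0)\}$ on the $x$-axis and the off-line point $(0,1)$, so Theorem~\ref{1-3config} applies since $g$ is real. I may thus assume $s\neq 0$, $a\neq s$, and $b\neq 0$. If additionally $a=0$ (possible in cases (i) and (ii)), then $\Lambda^\sharp$ is collinear on the $y$-axis, and the final contradiction will come from Theorem~\ref{hrt-colinear} rather than Theorem~\ref{3-2config}.

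Now I would write a putative nontrivial dependence as
\[
c_1 g + c_2 M_1 g + c_3 T_s g + c_4 M_b T_a g = 0,
\]
with $(c_1,c_2,c_3,c_4)\neq 0$. First, $c_3\neq 0$: otherwise the remaining three terms would violate the HRT for $3$ points (Proposition~\ref{known-results}(iii)). Taking complex conjugates (valid because $g=\bar g$), multiplying the original equation by $\bar c_3$ and the conjugated equation by $c_3$, and subtracting eliminates $T_s g$ and produces
\[
(\bar c_3 c_1 - c_3 \bar c_1) g + \bar c_3 c_2 M_1 g - c_3 \bar c_2 M_{-1} g + \bar c_3 c_4 M_b T_a g - c_3 \bar c_4 M_{-b} T_a g = 0,
\]
which is a relation on $\Lambda^\sharp$. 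The one delicate step is to verify that this derived relation is non-trivial: if the coefficients of both $M_1 g$ and $M_b T_a g$ vanished, then $c_2=c_4=0$ (since $c_3\neq 0$), and the original relation would collapse to $c_1 g + c_3 T_s g = 0$, contradicting the trivial independence of $\{g, T_s g\}$. Under the three hypotheses on $(a,b)$ in the corollary --- which match exactly the three cases of Theorem~\ref{3-2config}, with case (iii) crucially invoking that $g$ is real-valued --- this non-trivial dependence on $\Lambda^\sharp$ is the desired contradiction (or, in the sub-case $a=0$, Theorem~\ref{hrt-colinear} plays the same role). The hard part has been absorbed into Theorem~\ref{3-2config}; the remaining obstacle is just the non-triviality check, which is handled by isolating the $T_s g$ term and falling back on the trivial two-point case.
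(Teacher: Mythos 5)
Your proof is correct and follows essentially the same route as the paper: conjugate the putative dependence using $g=\bar g$, subtract to eliminate the $T_sg$ term, and obtain a nontrivial dependence for the symmetric configuration $\{(0,0),(0,\pm 1),(a,\pm b)\}$, which contradicts Theorem~\ref{3-2config}. Your treatment of the degenerate sub-cases ($b=0$, $a=0$, $a=s$) and your explicit non-triviality check are somewhat more detailed than the paper's, which simply dispatches $ab=0$ to Theorem~\ref{1-3config} and leaves the non-vanishing of the coefficient of $M_1g$ implicit.
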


\begin{proof}
 If $ab=0$ then, we are done by invoking Theorem~\ref{1-3config}. So we assume that $ab\neq 0$, and suppose by contradiction that there exist nonzero coefficients $c_1, c_2, c_3$ such that $$T_sg=c_1g+c_2M_1g+c_3M_{b}T_ag.$$ This implies that $$T_{s}g=\bar{c}_1g+\bar{c}_2M_{-1}g+\bar{c}_3M_{-b}T_{a}g.$$ Hence, $$ (c_1-\overline{c}_1)g + c_2M_1g -\overline{c}_2M_{-1}g + c_3M_{b}T_ag   - \bar{c}_3M_{-b}T_{a}g=0. $$ Consequently,  $\mathcal{G}(g, \Lambda)$ with $\Lambda=\{(0,0), (0,1), (0,-1), (a, b), (a, -b)\}$ is linearly dependent, which, contradicts Theorem~\ref{3-2config}. 
\end{proof}

\begin{example}\label{chris-example}
We recall the following conjecture. 
 \begin{conjecture}\cite[Conjecture 9.2]{Heil06}. Suppose $\Lambda=\{(0,0), (0,1), (1, 0), (\sqrt{2}, \sqrt{2})\}$, and if $0\neq g \in L^{2}(\R)$ with $\|g\|_2=1$ then $\mathcal{G}(g, \Lambda)$ is linearly independent. 
 \end{conjecture}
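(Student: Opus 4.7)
My plan is to try to reduce Heil's Conjecture to the restriction/extension principles established in this paper. The set $\Lambda=\{(0,0),(0,1),(1,0),(\sqrt{2},\sqrt{2})\}$ matches the template $\{(0,0),(0,1),(s,0),(a,b)\}$ of Corollary~\ref{4points-hrt} with $s=1$ and $a=b=\sqrt{2}$; since $a,b\notin\Q$ while $ab=2\in\Q$, hypothesis (iii) of that corollary is satisfied, and therefore Conjecture~\ref{hrt} already holds for every real-valued $0\neq g\in L^2(\R)$. Concretely, a putative linear dependence $T_1 g=c_1 g+c_2 M_1 g+c_3 M_{\sqrt{2}}T_{\sqrt{2}}g$ would, via Demeter's conjugate trick, force linear dependence of $\mathcal{G}(g,\Lambda')$ on the symmetric $(3,2)$ configuration $\Lambda'=\{(0,0),(0,1),(0,-1),(\sqrt{2},\sqrt{2}),(\sqrt{2},-\sqrt{2})\}$, which is excluded by Theorem~\ref{3-2config}(iii).

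For a general complex-valued $g$, the conjugate trick no longer closes the argument, and I would instead work with the extension function $F$ of Theorem~\ref{keyfunest} associated to the base three-point set $\Lambda_0=\{(0,0),(0,1),(1,0)\}$, for which HRT is already known. By Corollary~\ref{Fandhrt} the conjecture is equivalent to the single scalar inequality $F(\sqrt{2},\sqrt{2})<1$. First I would combine the integral identity $\iint_{\R^{2}}F\,da\,db=3$, the uniform continuity and decay stated in Theorem~\ref{keyfunest}(ii)--(iii), and the strict positive definiteness of $\widehat F$, to obtain quantitative upper bounds on $F$ outside a compact neighborhood of $\Lambda_0$. Next I would try to exploit symmetries of $F$ under the subgroup of symplectic transformations stabilizing $\Lambda_0$ (for instance, the reflection through the line $y=x$) so that an equality $F(\sqrt{2},\sqrt{2})=1$ would propagate to additional points whose union with $\Lambda_0$ forms a configuration already covered either by Theorem~\ref{3-2config} or by the $(2,2)$ result of \cite{DemZah12}, yielding the sought contradiction.

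The main obstacle is the complex-valued case itself. The conjugate trick enlarges $\Lambda$ by only one symmetric point when $g$ is real, but for complex $g$ it generates up to $2N$ vectors from $N$, and the enlarged configuration is generically too large to be covered by any existing HRT result. I expect overcoming this will require either a new averaging argument along the lines of the Birkhoff iteration used in Theorem~\ref{3-2config}(ii)--(iii), now applied to the quasi-periodic products arising from cosines with the incommensurate frequencies $a=b=\sqrt{2}$, or a detailed Fourier-analytic study of the critical set of $F$ using the explicit formula for $\widehat F$ derived in the proof of Theorem~\ref{keyfunest}(iv). The numerical evidence in Figure~\ref{Fig:fig6} makes this plausible, by suggesting that for a wide class of windows $F$ attains its global maximum exactly on the initial point set.
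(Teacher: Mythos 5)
Your first paragraph is exactly the paper's own argument: the paper treats this statement in Example~\ref{chris-example} and settles only the real-valued case, by applying Corollary~\ref{4points-hrt}(iii) with $s=1$, $a=b=\sqrt{2}\notin\Q$, $ab=2\in\Q$, which in turn rests on the symmetric $(3,2)$ configuration result of Theorem~\ref{3-2config}(iii) via the conjugate trick. So for real-valued $g$ your proposal and the paper coincide, and that is all the paper claims; the statement is quoted there precisely as a conjecture that remains open for general complex-valued $g\in L^2(\R)$.

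Your remaining two paragraphs are a program, not a proof, and as stated they would not close the complex-valued case. Two concrete problems: first, the global facts from Theorem~\ref{keyfunest} --- $\iint F=3$, uniform continuity, decay at infinity, positive definiteness of $\widehat F$ --- only give information ``on average'' or outside a large ball (Corollary~\ref{Fandhrt}), and cannot by themselves exclude $F(\sqrt{2},\sqrt{2})=1$ at one specific finite point; this is exactly the ``local'' difficulty flagged in Remark~\ref{hrt-is-local}. Second, the symmetry you propose, reflection through the line $y=x$, has determinant $-1$ and so is not a symplectic (metaplectic-implementable) transformation; the usable symmetry of this kind comes from complex conjugation (as in Lemma~\ref{Sym:F} and Theorem~\ref{1-3config}), which is precisely what forces the real-valuedness assumption, and in any case $(\sqrt{2},\sqrt{2})$ is fixed by that reflection, so no new point would be generated to build a configuration covered by Theorem~\ref{3-2config} or by the $(2,2)$ result of \cite{DemZah12}. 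Your honest assessment that the complex case is the obstacle is accurate; it is open both in your write-up and in the paper.
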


An application of Corollary~\ref{4points-hrt} settles this conjecture in the special case where $g$ is real-valued. Indeed, this follows from part {\bf (iii)} of Corollary~\ref{4points-hrt}  by taking $s=1$, $a=b=\sqrt{2}\not\in \Q$.
 \end{example}

If we assume that the  function $g$ is smoother, i.e., $g\in \S(\R)$ then we can extend \cite[Theorem 1.3]{Dem10} from $(2,2)$ configurations to certain symmetric $(3,2)$ configurations. It must be noted that the arguments given below were originally  introduced in \cite[Theorem 1.3]{Dem10}\footnote{The proof given in \cite[Theorem 1.3]{Dem10} contains a few inaccuracies that were fixed by C.~Demeter and posted on Math Arxiv as arXiv:1006.0732.}. For the sake of completeness we give the details of the proof below.

\begin{thm}\label{smooth-3-2config}
Let $g\in \S(\R)$, $\|g\|_2=1$. Suppose $\Lambda$ is a $(3,2)$ configuration given by $\Lambda=\{(0,0), (0,1), (0,-1), (a, b), (a, -b)\}$ where $b\neq 0$. Then, Conjecture~\ref{hrt-s} holds for  $\Lambda$ and $g$  whenever any of the following holds
\begin{enumerate}
\item[(i)] $a, b \in \Q$.
\item[(ii)] $a\in \Q$ but $b\not\in \Q$.
\item[(iii)] $a, b, ab \not\in \Q$.  
\item[(iv)] $a, b \not\in \Q$ but $ab \in \Q$, and $g$ is a real-valued function. 
\end{enumerate}
\end{thm}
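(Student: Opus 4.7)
Cases (i), (ii), and (iv) are not genuinely new: the proofs of the corresponding parts of Theorem~\ref{3-2config} use only $g\in L^{2}(\R)$ (and, in the case matching (iv), that $g$ is real-valued), so they apply verbatim when $g\in\S(\R)$. My plan is therefore to concentrate on the genuinely new case (iii), in which $a,b$, and $ab$ are all irrational and $g$ is only assumed to be Schwartz. Here the rational-side tricks of Theorem~\ref{3-2config}, namely using $a\in\Q$ or $b\in\Q$ to make either $P$ or $Q$ periodic, are no longer available, and the additional smoothness of $g$ must take their place.

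I would start by assuming, toward a contradiction, a dependence relation
\begin{equation}\label{dep-smooth}
c_1g+c_2M_ag+c_3M_{-a}g=d_1M_{-b}T_1g+d_2M_bT_1g,
\end{equation}
with all five coefficients nonzero (the vanishing of any one coefficient forces HRT for a $(2,2)$ configuration or an equispaced $(1,3)$ configuration, both already known). Passing to moduli as in the proof of Theorem~\ref{3-2config} gives the almost-everywhere recursion $|P(x)|\,|g(x)|=|Q(x)|\,|g(x-1)|$, where $P(x)=c_1+c_2e^{2\pi iax}+c_3e^{-2\pi iax}$ and $Q(x)=d_1e^{-2\pi ibx}+d_2e^{2\pi ibx}$. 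Selecting a base point $x_0$ in $\operatorname{supp}(g)\cap[0,1]$ whose integer translates avoid the countable zero sets of $P$ and $Q$, and iterating forward and backward, I obtain
\begin{equation}\label{iter-smooth}
\log|g(x_0\pm N)|=\log|g(x_0)|\pm\sum_{n=1}^{N}\bigl(\log|Q(x_0\pm n)|-\log|P(x_0\pm n)|\bigr).
\end{equation}

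Next I would invoke Weyl equidistribution: since $a,b$ are irrational, the sequences $\{a(x_0+n)\}$ and $\{b(x_0+n)\}$ equidistribute mod $1$, so the Birkhoff averages converge to
\[
I(P):=\int_0^1\log|c_1+c_2e^{2\pi it}+c_3e^{-2\pi it}|\,dt,\qquad I(Q):=\int_0^1\log|d_1e^{-2\pi it}+d_2e^{2\pi it}|\,dt.
\]
Inserting this into~\eqref{iter-smooth} and comparing with the Schwartz estimate $\log|g(x_0\pm N)|\le -K\log(N+1)+O(1)$ (valid for every $K\in\N$) forces $I(P)=I(Q)$: if $I(Q)>I(P)$, the forward iterate grows exponentially, contradicting rapid decay, and the reverse inequality yields the same contradiction via backward iteration.

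The hard part is the balanced case $I(P)=I(Q)$, and this is where I would transplant the refinement used by Demeter in \cite{Dem10} for $(2,2)$ configurations with Schwartz generator. The idea is to replace qualitative equidistribution with a quantitative Erd\H{o}s--Tur\'an or Koksma-type bound, and to exploit the fact that, because $a,b,ab\notin\Q$, the two mean-zero functions $\log|\phi_P|-I(P)$ and $\log|\phi_Q|-I(Q)$ are non-constant on the circle and have Fourier spectra supported on the incommensurable lattices $a\Z$ and $b\Z$, precluding any cancellation in the combined sum~\eqref{iter-smooth}. Along a suitable subsequence $N_k$ chosen by equidistribution of the pair $(\{ax_0+an\},\{bx_0+bn\})$ on the two-torus, the Birkhoff sum must exceed in absolute value any prescribed power of $\log N_k$ in one direction, and super-polynomial decay of the Schwartz function $g$ cannot accommodate that fluctuation, producing the contradiction. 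The only bookkeeping change from the $(2,2)$ case in \cite{Dem10} is that $\phi_P$ now has three Fourier modes rather than two, which enlarges the zero set to be avoided when choosing $x_0$ but does not alter the overall scheme.
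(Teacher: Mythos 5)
Your handling of cases (i), (ii), and (iv) by observing that the corresponding parts of Theorem~\ref{3-2config} already apply to $g\in\S(\R)$ matches the paper exactly, and your setup for case (iii) (the dependence relation, the recursion $|P(x)||g(x)|=|Q(x)||g(x-1)|$, and the choice of a good base point) is also the paper's. But the core of case (iii) has two genuine gaps. First, you apply Weyl equidistribution to conclude that the Birkhoff averages of $\log|P|$ and $\log|Q|$ along the orbits converge to the integrals $I(P)$, $I(Q)$. Equidistribution gives this only for Riemann-integrable (in particular bounded) observables; if $P$ or $Q$ has real zeros, $\log|P|$ and $\log|Q|$ have singularities and the averages along a specific orbit need not converge to the integral. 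The paper spends a substantial portion of the proof first showing that $P$ and $Q$ cannot vanish on $\R$ (via a propagation-of-zeros argument through the recursion and an analysis of when the zero sets of $P$ and $Q$ share a $\Z$-orbit), so that $\psi=\ln|P|$ and $\phi=\ln|Q|$ are continuous and bounded. You cannot skip this step.

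Second, and more seriously, your resolution of the balanced case $I(P)=I(Q)$ does not work as stated. There is no general lower bound on the fluctuations of Birkhoff sums of a smooth mean-zero function over an irrational rotation: by Denjoy--Koksma, along the denominators $q_k$ of the continued-fraction convergents these sums are $O(1)$ (this is exactly what the paper exploits in its estimate for the $\psi$-sums), and incommensurability of the frequencies $a$ and $b$ gives no mechanism forcing the combined sum to exceed any power of $\log N_k$. The paper's actual argument for the balanced case is a forward--backward comparison: Schwartz decay forces the forward sum $\sum(\phi-\psi)\to-\infty$ and the backward sum $\to+\infty$; choosing $N=q_k$ makes the forward and backward $\psi$-sums agree up to $O(1)$; and an ergodic-theorem choice of a second base point $z\approx\{-x-\tfrac{\theta'}{b}+\tfrac{n'}{2b}\}$, reflecting through the symmetry of $Q$, makes the backward $\phi$-sum at $z$ agree with the forward $\phi$-sum at $x$ up to $O(1)$. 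These four facts are incompatible, which is the contradiction. Your "quantitative discrepancy forces large fluctuations" step would need to be replaced by something of this kind; as written it is an unsupported (and in general false) claim.
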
 

\begin{proof} The proof is divided in a number of cases.

\noindent {\bf (i), (ii), (iv)}  follow from Theorem~\ref{3-2config}.

\vspace{.1in}

\noindent {\bf (iii)} Suppose that  $a, b \not \in \Q.$ Furthermore, assume that $ab\not\in \Q$. Using a metaplectic transformation, we may assume that $\Lambda$ is of the form $\Lambda=\{(0,0), (0,a), (0,-a), (1, b), (1, b)\}$, with $a, b, b/a\not\in \Q$. The rest of the proof is an extension of \cite[Theorem 1.3]{Dem10}. 

We follow the proof of part  {\bf (ii)} of Theorem~\ref{3-2config} and argue by contradiction. In particular, we assume that~\eqref{5lidep},~\eqref{recur5}, and~\eqref{decayinfty} hold for all $x\in I$, where $I\subset \text{supp}(g)\cap [0,1]$ is a set of positive measure. Recall that $P(x)=c_1+c_2e^{2\pi i ax}+c_3e^{-2\pi i ax}$ and $Q(x)=e^{2\pi i (-bx+\theta)}(r_1+r_2e^{2\pi i(2bx+\theta')})$ where $r_1, r_2\in (0, \infty)$, $\theta, \theta' \in [0,1)$, $c_1\in \R$, $c_2, c_3\in \C$ with $c_k\neq 0$ for $k=1,2,3$. 

\noindent We first prove that $\tfrac{|-c_1\pm \sqrt{c_1^2-4c_2c_3}|}{2|c_2|}\neq 1$. Suppose by way of contradiction that $\tfrac{|-c_1\pm \sqrt{c_1^2-4c_2c_3}|}{2|c_2|}= 1$. This implies that $P(x)=0$ has real solutions of the form $$x_k=\omega + \tfrac{k}{a}$$ for some $\omega\in \R$ and $k\in \Z$.

\noindent Next we prove that  $Q$ must also have some real roots. Indeed, assume that $Q(x)\neq 0$ for all $x\in \R$. Since $a\not\in \Q$ we can choose $k\in \Z$ with $x_k>0$ and $\{x_k\} \in I$ (recall that $\{u\}$ is the fractional part of $u$). Note that $g(\{x_k\})\neq 0$. We now use~\eqref{recur5} to get $$0=|P(x_k)||g(x_k)|=|Q(x_k)||g(x_k-1)|$$ Thus $g(x_k-1)=0$. We can continue this iteration to show that $g(x_k-n)=0$ for all $n>0$. Consequently, $g(\{x_k\})=0$ which is a contradiction. Therefore, $Q$ has real roots of the form $$y_n=\omega' +\tfrac{n}{2b}$$  for some $\omega'\in \R$ and $n\in \Z$. 

\noindent Furthermore, the zeros of $P$ and $Q$ must share a $\Z-$orbit. Indeed, if this was not the case, we must have that $x_k-y_n\not\in \Z$ for all $n, k\in \Z$. However, a repeated use of~\eqref{recur5} will lead to the following contradiction. For any  $k\in Z$ we have $0=|P(x_k)||g(x_k)|=|Q(x_k)||g(x_k-1)|$. Since $x_k=x_k-0$ is not a root of $Q$ we see that $g(x_k-1)=0$. Continuing in this fashion we see that $g(x_k-n)=0$ for all $n>0$. Which is a contradiction. In fact, there must exist $n\neq n' \in \Z$ and $m, m'\in \Z$ such that $$x_n-y_m, \, x_{n'}-y_{m'}\in \Z.$$ By taking the difference between these two numbers we see that $$\tfrac{N}{a}+\tfrac{M}{2b}=k$$ for some $N, M, k\in \Z$. Using the fact that $a, b\not\in \Q$ we arrive at the conclusion that all $N, M$ satisfying this equation must be of the form $N=\ell N_0$ and $M=\ell M_0$ for some fixed $N_0, M_0\in \Z\setminus \{0\}$ and arbitrary $\ell \in \Z$. In addition, all $n, m\in \Z$ such that $x_n-y_m\in \Z$ must be of the form 

\begin{equation*}
\left\{ \begin{array} {r@{\quad = \quad}l}
n & n_0+\ell N_0\\
m & m_0+\ell M_0,\end{array}\right.
\end{equation*}

for some fixed $n_0, m_0, N_0, M_0\in \Z$, $N_0, M_0 \neq 0$ and arbitrary $\ell \in \Z$. We also point out that for each $x_n$ there is at most one $y_m$ such that $x_n-y_m\in \Z$.

\noindent  Let $x_\ell= \omega_0+\ell\tfrac{N_0}{a}$ be a zero of $P$ where $\omega_0=\omega+\tfrac{n_0}{a}$, and $y_\ell$ be the zero of $Q$ such that $x_\ell-y_\ell \in \Z$. Note that $y_\ell=  \omega_0'+\ell\tfrac{M_0}{2b}$  where $\omega_0'=\omega'+\tfrac{m_0}{2b}$. Because $\tfrac{N_0}{a}\neq \tfrac{M_0}{2b}$, we can choose $\ell \in \Z$ such that one of the following three alternatives holds:\newline
\noindent $\bullet$ $0<x_\ell<y_\ell$\newline
\noindent $\bullet$ $x_\ell<0<y_\ell$\newline
\noindent $\bullet$ $x_\ell<y_\ell<0$\newline

If we assume that the first alternative holds, by ergodicity, we can choose $\ell\in \Z$ such that $u_\ell=\{x_\ell\}=\{y_\ell\}\in I$. Note that $g(u_\ell)\neq 0$ and using the recursion~\eqref{recur5} and the fact that $Q$ is nonzero on the orbit before $y_\ell$, we see that $g(u_\ell +1)\neq 0$, which implies that $g(u_\ell+2)\neq 0$. We can continue all the way to $g(u_\ell+n)\neq 0$ where $n\in \Z$ is such that $u_\ell +n+1=x_\ell$. Applying~\eqref{recur5} one more time will give $$0=|P(x_\ell)||g(x_\ell)|=|Q(x_\ell)||g(u_\ell+n)|\neq 0$$

It follows that $\inf_{x\in \R}|P(x)|>0$. Similarly, we show that $\inf_{x\in \R}|Q(x)|>0$. Consequently, $\psi(x)=\ln|c_1+c_2e^{2\pi ix}+c_3e^{-2\pi i x}|$ and $\phi(x)=\ln|r_1+r_2e^{2\pi i(2x+\theta')}|$ are well-defined and continuous on $\R$  

Using~\eqref{5lidep} and~\eqref{decayinfty} we see that for each $x, z\in I$

\begin{equation}\label{negativeinf}
\lim_{N\to \infty}\sum_{n=1}^N\phi(bx+bn)-\sum_{n=1}^N\psi(ax+an)=-\infty 
\end{equation}
and 
\begin{equation}\label{positiveinf}
\lim_{N\to \infty}\sum_{n=-N}^{-1}\phi(bz+bn)-\sum_{n=-N}^{-1}\psi(az+an)=\infty 
\end{equation}

We now use the approximation of $a$ by rational and the fact $|\psi'|\gtrsim 1$ to control parts of the above sums. 

Let $p_k, q_k$ relatively prime integers, $q \to \infty$ such that $$|a-\tfrac{p_k}{q_k}|\leq \tfrac{1}{q_k^2}.$$ Furthermore, $$|na-\tfrac{np_k}{q_k}|\leq \tfrac{1}{q_k}, \, -q_k\leq n\leq q_k.$$ By a Riemann sum approximation we see that $$\bigg{|}\sum_{n=1}^{q_k}\psi(ax+an)-q_k\int_0^1\psi\bigg{|}=O(1)$$ and 

$$\bigg{|}\sum_{n=-q_k}^{-1}\psi(ax+an)-q_k\int_0^1\psi\bigg{|}=O(1)$$ for each $x\in [0,1].$ Consequently, for each $y, z\in I$ we have

\begin{equation}\label{sum1}
\bigg{|}\sum_{n=1}^{q_k}\psi(ay+an)-\sum_{n=-q_k}^{-1}\psi(az +an)\bigg{|}=O(1).
\end{equation}

Now using Birkhoff's pointwise ergodic theorem for $1_I$, we can choose $x\in I$, $n'\in N$ such that $z:=\{-x-\tfrac{\theta'}{b}+\tfrac{n'}{2b}\}\in I$. Let $y:=-x-\tfrac{\theta'}{b}+\tfrac{n'}{2b}$ and $m=y-z$. Then $$\sum_{n=-N+m}^{-1+m}\phi(bz+bn)=\sum_{n=1}^N\phi(by-bn)=\sum_{n=1}^N\phi(bx+bn).$$ Observe that for each $N$

\begin{align*}
\sum_{n=-N}^{-1}\phi(bz+bn)&=\sum_{n=-N}^{-N+m-1}\phi(bz+bn)+ \sum_{n=-N+m}^{-1+m}\phi(bz+bn)+\sum_{n=m}^{-1}\phi(bz+bn)\\
&=\sum_{n=1}^N\phi(bx+bn)+ \sum_{n=-N}^{-N+m-1}\phi(bz+bn)+\sum_{n=m}^{-1}\phi(bz+bn)
\end{align*}

Consequently, for each $N$ 

\begin{align}\label{sum2}
\bigg{|}\sum_{n=-N}^{-1}\phi(bz+bn)-\sum_{n=1}^N\phi(bx+bn)\bigg{|}&=\bigg{|}\sum_{n=-N}^{-N+m-1}\phi(bz+bn)+ \sum_{n=m}^{-1}\phi(bz+bn)\bigg{|}\\ \notag
&=O(1)
\end{align} where we bound the last sum by a constant that depends only on $m, z,$ and $b$. However, \eqref{negativeinf}--\eqref{sum2} cannot simultaneously hold. This completes the proof.

Next we suppose that $a, b\not\in \Q$ but $ab\in \Q$. 

%
%
%
%

\end{proof}

 We note that as observed in \cite[Theorem 1.3]{Dem10}, rather than assuming that $g\in \S(\R)$ we could assume that $g\in L^2(\R)$ is continuous and is such that $\lim_{|n|\to \infty\, n\in \Z}|g(x-n)|=0$ for all $x\in [0,1]$. 

\begin{rem} Suppose that $g\in \S(\R)$ is real-valued. In addition to the cases covered by Corollary~\ref{4points-hrt}, Theorem~\ref{smooth-3-2config} can be used to settle Conjecture~\ref{hrt-s} when  $a, b, ab\not \in \Q$.

\end{rem}

%

\begin{rem}
We summarize what is known about the HRT for $4$ points. In $\R^2$, a set  $\Lambda$ consisting of four distinct points in $\R^2$ can be such that:
\begin{enumerate}
\item[(1)] The four points are  collinear, in which case their convex hull is a line segment,
\item[(2)] The four points form a $(1,3)$ configuration,  in which case their convex hull is a triangle, 
\item[(3)] The four points  form a $(2,2)$ configuration,  in which case their convex hull is a trapezoid, or
\item[(4)] The four points are in none of the previous three categories,  in which case their convex hull is a general quadrilateral. 
\end{enumerate}

\noindent $\bullet$ In the first case Conjecture~\ref{hrt} holds for any $g \in L^2(\R)$ \cite{HRT96}.\newline
\noindent $\bullet$ In the second case Conjecture~\ref{hrt-s} holds for any $g \in \S(\R)$ \cite{Dem10}. However, if $g \in L^2$,  Conjecture~\ref{hrt} is true  when the three collinear points are also equispaced \cite{HRT96}. More generally, \cite{Dem10} has condition under which the conjecture remains true, and,  in fact, Conjecture~\ref{hrt} holds for almost all $(1,3)$ configurations. The results of this paper allow us to conclude that when $g\in L^2$ is also real-valued then Conjecture~\ref{hrt} for all $(1,3)$ configurations.\newline
\noindent $\bullet$ For the third case, Conjecture~\ref{hrt} holds for any $g \in L^2(\R)$ \cite{DemZah12}.\newline
\noindent $\bullet$ In the last case,  to the best of our knowledge both Conjecture~\ref{hrt} and Conjecture~\ref{hrt-s} remain open. However,  when $g(x)=e^{-|x|^{\epsilon}}$ with $\epsilon>0$ Conjecture~\ref{hrt} holds for any set of $4$ points \cite{BeBo13}. In fact, when $g$ decays faster than any exponential the HRT conjecture has been established not only in dimension one, but also in higher dimensions \cite{BeBo13, BowSpee13, BowSpee16}. In this paper we showed that when $g\in L^2(\R)$ is real-valued  Conjecture~\ref{hrt} holds for a family of four distinct points.  

But is still unclear whether the HRT holds for any $4$ points and every $g\in L^2(\R)$.

\end{rem}

\section*{Acknowledgment} 
The author thanks C.~Heil for introducing him to this fascinating and addictive problem, and for invaluable comments and remarks on earlier versions of this paper. He also thanks R.~Balan, J.~J.~Benedetto, and D.~Speegle for helpful discussions over the years about various versions of the results presented here. He acknowledges C.~Clark's help in generating the pictures included in the paper. Finally, he thanks W.~Liu for helpful discussions, and the anonymous referees for their useful and insightful comments and remarks. 

\bibliographystyle{amsplain}
\bibliography{MS17075_Bib}

\end{document}